\newtheorem{Thm}{Theorem}[section]
\newtheorem{Lem}[Thm]{Lemma}
\newtheorem{Prop}[Thm]{Proposition}
\theoremstyle{definition}
\newtheorem{Rem}[Thm]{Remark}
\begin{document}

\title[Bahadur efficiency of  MLE and one-step estimator]{Bahadur efficiency of the maximum likelihood estimator  and one-step estimator for quasi-arithmetic means of the Cauchy distribution}

\author[Y.Akaoka]{Yuichi Akaoka}
\address{Department of Mathematics, Faculty of Science, Shinshu University}
\curraddr{Gunma bank}
\email{18ss101b@gmail.com}

\author[K. Okamura]{Kazuki Okamura}
\address{Department of Mathematics, Faculty of Science, Shizuoka University}
\email{okamura.kazuki@shizuoka.ac.jp}

\author[Y. Otobe]{Yoshiki Otobe}
\address{Department of Mathematics, Faculty of Science, Shinshu University}
\email{otobe@math.shinshu-u.ac.jp}

\subjclass[2000]{62F10,  62F12, 62E20,  60F10}
\keywords{Bahadur efficiency, maximum likelihood estimator, one-step estimator, Cauchy distribution}
\date{\today}
\dedicatory{}

\maketitle

\begin{abstract}
Some quasi-arithmetic means of random variables easily give unbiased strongly consistent closed-form estimators of the joint of the location and scale parameters of the Cauchy distribution. 
The one-step estimators of those quasi-arithmetic means of the Cauchy distribution are considered. 
We establish the Bahadur efficiency of the maximum likelihood estimator and the one-step estimators. 
We also show that the rate of the convergence of the mean-squared errors  achieves the Cram\'er-Rao bound. 
Our results are also applicable to the circular Cauchy distribution. 
\end{abstract}


\section{Introduction}

In the parameter estimation of the location $\mu \in \mathbb{R}$ and the scale $\sigma > 0$ of the Cauchy distribution, 
it is difficult to balance efficiency with computational difficulty. 
So far, various approaches have been taken. 
The maximal likelihood estimation (MLE) has been considered by \cite{Haas1970, Copas1975, Ferguson1978, Gabrielsen1982, Reeds1985, Saleh1985, Bai1987,  Vaughan1992, McCullagh1992, McCullagh1993, McCullagh1996, Matsui2005}.  
The order statistics, which includes analysis for central values and quantiles,  is used in \cite{Ogawa1962, Ogawa1962a, Rothenberg1964, Barnett1966, Bloch1966, Chan1970, Balmer1974, Cane1974, Rublik2001, Zhang2009, Kravchuk2012}.  
Other approaches are taken by \cite{Howlader1988, Higgins1977, Higgins1978, Boos1981, Guertler2000, Besbeas2001, Onen2001, Kravchuk2005, CohenFreue2007}.  
Results obtained before 1994 are thoroughly surveyed in the book by Johnson-Kotz-Balakrishnan \cite[Chapter 16]{Johnson1994}. 

In \cite{Akaoka2021-1}, 
the authors suggest new estimators of the parameters of the Cauchy distribution in the case that neither the location $\mu$ nor the scale $\sigma$ is known by dealing with quasi-arithmetic means of independent and identically distributed (i.i.d.) random variables. 
For  the parameter estimation of the Cauchy distribution, 
some of the quasi-arithmetic means of a sample have closed-forms,  and are unbiased and strongly consistent, under McCullagh's parametrization \cite{McCullagh1993, McCullagh1996}. 
Those quasi-arithmetic means are easy to construct and analyze rigorously.  
Indeed, in \cite{Akaoka2021-3}, the authors construct confidence discs for $\mu + \sigma i$ without numerical analysis. 

Considering the one-step estimators of $\sqrt{n}$-consistent estimators is a useful way to obtain efficient estimators. 
It is well-known that under some conditions, they achieve the Cram\'er-Rao bound via the central limit theorem for the one-step estimators (See \cite[Section 7.3]{Lehmann1999}).    
Our one-step estimators are simple and easy to calculate, according that the initial estimators, which are quasi-arithmetic means here, are simple and easy to calculate. 
We also show that our one-step estimators and the MLE are efficient in the Bahadur sense, which concerns  large deviation estimates.  
Our proof of the case of the MLE  depends on Arcones \cite{Arcones2006}, Shen \cite{Shen2001} and the explicit formula of the Kullback-Leibler divergence between the Cauchy distributions recently obtained by Chyzak-Nielsen \cite{Chyzak2019}. 
We also show that the rate of the convergence of the mean-squared errors achieves the Cram\'er-Rao bound. 
Our results are also applicable to the circular Cauchy distribution, which are closely connected with the Cauchy distribution via the M\"obius transformations.

This paper is organized as follows. 
In Section 2, we give some preliminary results for tail estimates of quasi-arithmetic means. 
In Section 3, we establish the Bahadur efficiency of the MLE. 
In Section 4, we establish the Bahadur efficiency and the rate of the convergence of the mean-squared errors of the one-step estimators of the quasi-arithemtic means, which are the main results of this paper. 
In Section 5, we give an application of  parameter estimation for the circular Cauchy distribution. 
Section 6 is devoted to proofs of the results in Sections 2, 3, 4 and 5. 
In Appendix, we give numerical computations for the mean-squared errors of the one-step estimators in Section 4. 

\subsection{Framework}

Let $\mathbb H$ be the upper-half plane and $\overline{\mathbb H} = \mathbb{H} \cup \mathbb R$. 
Let $i$ be the imaginary unit. 
We often denote $\theta := \mu + \sigma i$. 
For $\theta = \mu + \sigma i$, we let 
\[ P_{\theta}(dx) = p(x; \theta) dx, \ \ p(x; \theta) = p(x; (\mu, \sigma)) := \frac{\sigma}{\pi} \frac{1}{(x - \mu)^2 + \sigma^2}. \]
For $\alpha \in \mathbb C$, we denote its complex conjugate by $\overline{\alpha}$. 

Let $(X_i)_i$ be an i.i.d. sequence of random variables with distribution $C(\mu, \sigma)$ on a probability space. 
We often denote the distribution following $P_{\theta}$ by $C(\theta)$.   

We deal with the quasi-arithmetic means of the i.i.d. random variables.   
This has the form that 
\[ f^{-1} \left( \frac{1}{n} \sum_{j=1}^{n} f(X_j) \right), \]
where $U$ is a domain containing $\overline{\mathbb H}  \setminus \{\alpha\}$ for some $\alpha \in \{x+yi : y \le 0\}$ and $f : U \to \mathbb C$ is a continuous injective holomorphic  function such that $f\left(\overline{\mathbb H}  \setminus \{\alpha\}\right)$ is convex, $\lim_{z \to \alpha} (z-\alpha) f(z) = 0$ and furthermore $f$ has sublinear growth at infinity, specifically, $\lim_{|z| \to \infty} f(z)/|z| = 0$. 

We remark that $f$ is not only in the $C^{\infty}$ class but also holomorphic on a domain containing $\overline{\mathbb H}  \setminus \{\alpha\}$. 
By the residue theorem,  
\begin{equation}\label{residue}
E[f(X_1)] = f(\theta). 
\end{equation}
If $\textup{Im}(\alpha) < 0$, then, $\overline{\mathbb H}  \setminus \{\alpha\} = \overline{\mathbb H}$. 
On the other hand, if $\textup{Im}(\alpha) = 0$, then, $\overline{\mathbb H}  \setminus \{\alpha\} \subsetneq \overline{\mathbb H}$. 
See \cite{Akaoka2021-1} for more details. 

In this paper we additionally assume that there exists a constant $\lambda > 0$ such that 
\begin{equation}\label{exp-moment}
E\left[\exp\left(\lambda \left| f(X_1) \right| \right) \right] < +\infty. 
\end{equation} 

\cite{Akaoka2021-1} deals with quasi-arithmetic means with its generators of the form 
\[ f(x) = \log(x+\alpha), \ \alpha \in \overline{\mathbb{H}}, \]
or 
\[ f(x) =  \frac{1}{x+\alpha}, \ \ \ \alpha \in \mathbb{H},  \]
each of which corresponds to the geometric mean and a modification of the harmonic mean.  
We remark that $\alpha \in \mathbb{R}$ is allowed 
when we deal with the case that $f(x) = \log (x+\alpha)$. 
\eqref{residue} and \eqref{exp-moment} hold for the case that $f(x) = \log (x+\alpha), \ \alpha \in \overline{\mathbb{H}}$ and that $f(x) = 1/(x + \alpha), \ \alpha \in \mathbb{H}$.  
We remark that the quasi-arithmetic means with generator $f(x) =  1/(x + \alpha)$ are identical with the  quasi-arithmetic means with generator 
$$f(x) = \frac{x + \overline{\alpha}}{x  + \alpha} = \frac{x - \beta}{x - \overline{\beta}}, \ \ \beta = -\overline{\alpha}.$$  
See \cite[Example 1.2 (ii)]{Akaoka2021-1}  for more details.

\section{Large deviations for quasi-arithmetic means}

We first give a decay rate of quasi-arithmetic means. 
We assume that  \eqref{residue} and \eqref{exp-moment} hold.  

\begin{Thm}\label{LDP-chernoff}
There exist positive constants $c_1$ and $c_2$ such that 
for every $\epsilon \in (0, \sigma)$ and every $n \ge 2$, 
\[ P\left( \left|  f^{-1} \left( \frac{1}{n} \sum_{j=1}^{n} f(X_j) \right)  - \theta \right| > \epsilon\right) \le c_1 \exp\left(-c_2  n \epsilon^2\right). \]
\end{Thm}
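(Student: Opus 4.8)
The plan is to push the problem through the map $f$ and reduce it to a standard exponential tail bound for a sample mean of random variables possessing exponential moments. Write $S_n := \frac{1}{n}\sum_{j=1}^{n} f(X_j)$ and $\Omega := f(\overline{\mathbb{H}}\setminus\{\alpha\})$. Since every $X_j$ is real we have $f(X_j)\in\Omega$, and since $\Omega$ is convex the average $S_n$ again lies in $\Omega$; hence $f^{-1}(S_n)$ is well defined and equals the estimator. By \eqref{residue}, $E[S_n]=f(\theta)$, so heuristically $S_n$ concentrates at $f(\theta)$ and $f^{-1}(S_n)$ at $\theta$. (For $n=1$ the estimator is $X_1\in\mathbb{R}$, whence the left-hand side equals $1$; this is why $n\ge 2$ is assumed.)

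The key deterministic step is a separation estimate: I would prove that there is a constant $c_0>0$, depending only on $f$ and $\theta$, such that
\[ |w-\theta|>\epsilon \ \Longrightarrow\ |f(w)-f(\theta)|>c_0\,\epsilon \qquad (w\in\overline{\mathbb{H}}\setminus\{\alpha\},\ 0<\epsilon<\sigma). \]
Granting this and setting $w=f^{-1}(S_n)$, the event in question is contained in $\{|S_n-f(\theta)|>c_0\epsilon\}$, which reduces everything to a tail bound for $S_n$.

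To establish the separation estimate I would analyze $G(\epsilon):=\inf\{|f(w)-f(\theta)| : w\in\overline{\mathbb{H}}\setminus\{\alpha\},\ |w-\theta|\ge\epsilon\}$ and show $\inf_{0<\epsilon<\sigma}G(\epsilon)/\epsilon>0$. Near $\theta$, injectivity of the holomorphic $f$ gives $f'(\theta)\ne 0$, so $|f(w)-f(\theta)|=|f'(\theta)|\,|w-\theta|(1+o(1))$ and $G(\epsilon)/\epsilon\to|f'(\theta)|$ as $\epsilon\to 0$. For $\epsilon$ bounded away from $0$ I would argue $G(\epsilon)>0$: the restriction $\epsilon<\sigma$ keeps the punctured disc away from $\mathbb{R}$ and from $\alpha$, while $\theta\in\mathbb{H}$ is interior, so by the open mapping theorem $f(\theta)$ lies in the interior of $\Omega$ at positive distance $d_0$ from $\partial\Omega$. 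Properness of $f$, guaranteed by the sublinear growth $f(z)/|z|\to 0$ at infinity and by $\lim_{z\to\alpha}(z-\alpha)f(z)=0$ near $\alpha$, forces $f(w)\to\partial\Omega$ whenever $w$ leaves every compact subset of $\overline{\mathbb{H}}\setminus\{\alpha\}$, so no sequence with $|w_k-\theta|\ge\epsilon$ can have $f(w_k)\to f(\theta)$; this yields $G(\epsilon)>0$. In particular every boundary or far-away $w$ already satisfies $|f(w)-f(\theta)|\ge d_0\ge(d_0/\sigma)\epsilon$, and combining this with the local estimate and the continuity of $G$ gives the uniform bound $G(\epsilon)\ge c_0\epsilon$.

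For the probabilistic step, put $Z_j:=f(X_j)-f(\theta)$; these are i.i.d., centered by \eqref{residue}, and by \eqref{exp-moment} their real and imaginary parts have finite exponential moments. Since $\{|S_n-f(\theta)|>c_0\epsilon\}\subseteq\{|\frac{1}{n}\sum_j\mathrm{Re}\,Z_j|>c_0\epsilon/\sqrt{2}\}\cup\{|\frac{1}{n}\sum_j\mathrm{Im}\,Z_j|>c_0\epsilon/\sqrt{2}\}$, applying a Bernstein/Chernoff inequality to each real coordinate and using that the deviation $c_0\epsilon/\sqrt{2}$ stays bounded (because $\epsilon<\sigma$) produces a Gaussian-type bound $\le 2\exp(-c_2 n\epsilon^2)$ for each; a union bound then gives the claim with $c_1=4$. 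I expect the separation estimate, and specifically the uniform-in-$\epsilon$ positivity of $G$ near the ends of the domain, to be the main obstacle, since that is exactly where the growth hypotheses on $f$ must be used; the Chernoff step is routine once the exponential moment is available and the deviation is kept bounded.
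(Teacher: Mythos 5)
Your reduction is the same as the paper's: contain the event in $\{|\frac{1}{n}\sum_j f(X_j)-f(\theta)|>c_0\epsilon\}$ by a deterministic estimate, then split into real and imaginary parts and apply the Cram\'er--Chernoff method, with \eqref{residue} providing the centering, \eqref{exp-moment} the exponential moments, and $\epsilon<\sigma$ keeping the deviation bounded so that the quadratic rate $\exp(-cn\epsilon^2)$ is legitimate. That probabilistic half is correct and matches the paper. The gap is in your proof of the separation estimate, in the regime where $w$ escapes the domain. The claim that sublinear growth and $\lim_{z\to\alpha}(z-\alpha)f(z)=0$ force $f(w)\to\partial\Omega$ whenever $w$ leaves every compact subset of $\overline{\mathbb H}\setminus\{\alpha\}$ is false, and it fails precisely for the paper's principal example $f=\log$ (excluded point $0$): there $\Omega$ is the closed strip $\{z:0\le\operatorname{Im}z\le\pi\}$, and $w_k=ik\to\infty$ gives $f(w_k)=\log k+i\pi/2$, which stays at distance $\pi/2$ from $\partial\Omega$ forever (likewise $w_k=i/k\to 0$ gives $-\log k+i\pi/2$). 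Sublinear growth only controls the size of $|f|$; it says nothing about $f(w)$ approaching the boundary of the image, so this step of your argument genuinely fails.

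The conclusion you wanted from it, namely $G(\epsilon)>0$, is nevertheless true, and the repair is shorter than the broken step: since $\epsilon<\sigma$, the disc $B(\theta,\epsilon)$ lies in $\mathbb H$, so by the open mapping theorem $f(B(\theta,\epsilon))$ is an open neighborhood of $f(\theta)$; if points $w_k\in\overline{\mathbb H}\setminus\{\alpha\}$ with $|w_k-\theta|\ge\epsilon$ had $f(w_k)\to f(\theta)$, then eventually $f(w_k)\in f(B(\theta,\epsilon))$, and global injectivity of $f$ on $U$ would force $w_k\in B(\theta,\epsilon)$, a contradiction. This one argument covers your compactness case and both escape cases (to $\infty$ and to $\alpha$) simultaneously, with no properness needed. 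It is also, in substance, what the paper's terse proof invokes: by global injectivity and the open mapping theorem the inverse $f^{-1}$ is holomorphic, hence locally Lipschitz, on a neighborhood of $f(\theta)$, and the event inclusion with a constant uniform over $\epsilon\in(0,\sigma)$ follows from that Lipschitz bound.
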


This also plays a crucial role in the proof of the Bahadur efficiency of the one-step estimator with initial estimator.   

By the contraction principle, 
we have the following:
\begin{Thm}
The quasi-arithmetic means $ f^{-1} \left( \frac{1}{n} \sum_{j=1}^{n} f(X_j) \right)$ satisfy the large deviation principle with rate function 
\begin{multline*}
I(z) 
= \sup_{\lambda_1, \lambda_2 \in \mathbb{R}} \biggl(\lambda_1 \textup{Re}(f(z)) + \lambda_2 \textup{Im}(f(z)) \\ - \log E\left[ \exp\left(  \lambda_1 \textup{Re}(f(z)) + \lambda_2 \textup{Im}(f(z)) \right) \right]\biggr),    
\end{multline*}
where $z \in \mathbb{H}$. 
\end{Thm}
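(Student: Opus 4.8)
The plan is to realize this statement as a two-step consequence of the i.i.d.\ theory of large deviations: first the multidimensional Cram\'er theorem applied to the empirical means of the vectors $f(X_j)$, and then the contraction principle applied through the continuous map $f^{-1}$. Identifying $\mathbb{C}$ with $\mathbb{R}^2$, set $Y_j := (\textup{Re}(f(X_j)), \textup{Im}(f(X_j)))$; these are i.i.d.\ random vectors whose sample mean $W_n := \frac1n \sum_{j=1}^n Y_j$ coincides, under $\mathbb{R}^2 \cong \mathbb{C}$, with $\frac1n \sum_{j=1}^n f(X_j)$, the argument of $f^{-1}$. Thus the quasi-arithmetic mean equals $f^{-1}(W_n)$, and it suffices to produce the LDP for $W_n$ and transport it through $f^{-1}$.

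For the first step I would introduce the cumulant generating function
\[ \Lambda(\lambda_1, \lambda_2) := \log E\left[ \exp\left( \lambda_1 \textup{Re}(f(X_1)) + \lambda_2 \textup{Im}(f(X_1)) \right) \right]. \]
The exponential moment assumption \eqref{exp-moment}, combined with the elementary bound $|\lambda_1 \textup{Re}(w) + \lambda_2 \textup{Im}(w)| \le \sqrt{\lambda_1^2 + \lambda_2^2}\,|w|$, shows that $\Lambda$ is finite on the disc $\{\lambda_1^2 + \lambda_2^2 \le \lambda^2\}$, so the origin lies in the interior of the effective domain of $\Lambda$. This is precisely the hypothesis under which the multidimensional Cram\'er theorem yields the full LDP for $W_n$ with good rate function given by the Legendre--Fenchel transform $\Lambda^*(w) = \sup_{\lambda_1, \lambda_2}\bigl(\lambda_1 \textup{Re}(w) + \lambda_2 \textup{Im}(w) - \Lambda(\lambda_1, \lambda_2)\bigr)$.

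For the second step I would invoke the contraction principle with the map $g := f^{-1}$. Because $f$ is a holomorphic injection, $g$ is continuous on $f(\mathbb{H})$ and the equation $g(w) = z$ has the single solution $w = f(z)$; hence the pushed-forward rate function is $I(z) = \inf\{\Lambda^*(w) : f^{-1}(w) = z\} = \Lambda^*(f(z))$, which is exactly the asserted formula (the expectation being taken over $X_1$). The main obstacle is a domain-matching issue: $f^{-1}$ is not defined on all of $\mathbb{R}^2$, so to apply the contraction principle one must confine the deviations to the set where it lives. Here the convexity of $f(\overline{\mathbb{H}} \setminus \{\alpha\})$ is decisive, since the empirical means $W_n$ take values in this closed convex set and, correspondingly, the effective domain of $\Lambda^*$ is contained in its closure; thus $f^{-1}$ may be taken continuous on a neighborhood of the support of the deviations and the contraction is legitimate. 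Restricting attention to $z \in \mathbb{H}$ as in the statement keeps us in the region where $f^{-1}$ is genuinely smooth, avoiding the boundary subtleties at $\mathbb{R}$ and at $\alpha$.

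As a sanity check, note that by \eqref{residue} the mean of $Y_1$ is $f(\theta)$, so $\Lambda^*(f(z))$ vanishes exactly at $z = \theta$ by injectivity of $f$, and its quadratic growth near $\theta$ (inherited from the strict convexity of $\Lambda$ at the origin, transported by the local diffeomorphism $f$) is consistent with the Gaussian-type decay rate $e^{-c_2 n \epsilon^2}$ of Theorem~\ref{LDP-chernoff}.
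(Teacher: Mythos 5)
Your proposal is correct and takes essentially the same route as the paper, whose entire proof of this theorem is the one-line invocation ``By the contraction principle'': namely, the $\mathbb{R}^2$-valued Cram\'er LDP for the empirical means of $f(X_j)$ (valid since \eqref{exp-moment} puts the origin in the interior of the domain of the cumulant generating function) pushed forward through the continuous injection $f^{-1}$. Your reading of the rate function as $\Lambda^*(f(z))$ with the expectation taken over $f(X_1)$ (rather than the deterministic $f(z)$ literally printed in the statement, an evident typo) is the intended one, and your discussion of the domain-matching issue for $f^{-1}$ is in fact more careful than what the paper provides.
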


See Dembo-Zeitouni \cite{Dembo2010} for the basic terminologies and results of the theory of large deviations. 

Under the consideration of \cite[Eq.(1.5)]{Akaoka2021-1}, 
we conjecture that 
\[ \lim_{\epsilon \to +0} \frac{1}{\epsilon^2}  \lim_{n \to \infty} \frac{1}{n} P\left( \left|  f^{-1} \left( \frac{1}{n} \sum_{j=1}^{n} f(X_j) \right)  - \theta \right| > \epsilon\right) = -\frac{\left| f^{\prime} \left( \theta \right) \right|^2}{\textup{Var} (f(X_1))}. \]
The right hand side is called an {\it inaccuracy rate}. 
The quasi-arithmetic mean $ f^{-1} \left( \frac{1}{n} \sum_{j=1}^{n} f(X_j) \right)$ can be regarded as an M-estimator.  
Let $\phi(x, \theta) := f(x) - f(\theta)$.  
Then, $  \theta =  f^{-1} \left( \frac{1}{n} \sum_{j=1}^{n} f(X_j) \right)$
if and only if $ \sum_{j=1}^{n} \phi(X_j, \theta) =  0$. 

The inaccuracy rates for one-dimensional M-estimators are considered by Jure{\v{c}}kov{\'{a}} and Kallenberg \cite{Jureckova1987}. 
Multidimensional large deviation principles for M-estimators are considered by \cite{Arcones2006}.  
\cite[Theorem 3.5]{Arcones2006} is not applicable to the case the both of the location and the scale are unknown.  
We do not pursue further properties for large deviations for complex-valued M-estimators here. 

\section{Bahadur efficiency of  the maximum likelihood estimator}

Let $\hat{\theta}_n$ be the maximal likelihood estimator (MLE) of $\theta = \mu + \sigma i$.  
Copas \cite{Copas1975}  showed that the joint likelihood function for the location and scale parameters of the Cauchy distribution is unimodal.  
Let 
\[ h(x,t) := \frac{x-t}{x- \overline{t}}, \ \ x \in \overline{\mathbb H}, t \in \mathbb{H}. \]
The MLE $\hat{\theta}_n = \hat{\theta}_n (X_1, \dots, X_n)$ is a unique solution of the following  equation for $\alpha$: 
\[ \sum_{j =1}^{n}  h(X_j, \alpha) =  \sum_{j =1}^{n} \frac{X_j - \alpha}{X_j - \overline{\alpha}} = 0. \]
This is the likelihood equation in the complex form.   
See \cite[Corollary 2.8]{Okamura2021} for more details and \cite[Proposition 2.2]{Okamura2021} for another expression for  the likelihood equation in the complex form. 
For $n \le 4$, the solution of the likelihood equation has a closed form (Ferguson \cite{Ferguson1978}).  
However, for $n \ge 5$, we do not have any algebraic closed-form formula (\cite{Okamura2021}). 

Let the Kullback-Leibler divergence be 
\[ K(P_{\theta^{\prime}}| P_{\theta}) := \int_{\mathbb R} p(x; \theta^{\prime}) \log \frac{p(x; \theta^{\prime})}{p(x; \theta)} dx, \ \ \theta \in \mathbb H. \]

\begin{Prop}[Chyzak-Nielsen   {\cite{Chyzak2019}}]\label{CN}
For $\theta, \theta^{\prime} \in \mathbb H$, 
\begin{equation}\label{KL}
K(P_{\theta^{\prime}}| P_{\theta}) = \log\left(1 + \frac{| \theta - \theta^{\prime}|^2}{4 \textup{Im}(\theta) \textup{Im}(\theta^{\prime})}\right). 
\end{equation} 
\end{Prop}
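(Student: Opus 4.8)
The plan is to evaluate the defining integral directly and reduce it to a single potential-theoretic identity. First I would substitute the two Cauchy densities and split the logarithm, obtaining $K(P_{\theta^{\prime}}\mid P_{\theta}) = \log(\sigma^{\prime}/\sigma) + \int_{\mathbb{R}} p(x;\theta^{\prime}) \log\frac{(x-\mu)^2+\sigma^2}{(x-\mu^{\prime})^2+(\sigma^{\prime})^2}\,dx$. Since $(x-\mu)^2+\sigma^2 = |x-\theta|^2$ for real $x$, the integrand equals $2\bigl(\log|x-\theta| - \log|x-\theta^{\prime}|\bigr)$, so everything reduces to evaluating the single family $J(\theta^{\prime},w) := \int_{\mathbb{R}} p(x;\theta^{\prime}) \log|x-w|\,dx$ for $w \in \mathbb{H}$, after which $K = \log(\sigma^{\prime}/\sigma) + 2 J(\theta^{\prime},\theta) - 2 J(\theta^{\prime},\theta^{\prime})$. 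All these integrals converge, since $\log|x-w|\sim\log|x|$ at infinity against a tail $\sim|x|^{-2}$.

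The key observation is that $p(\cdot;\theta^{\prime})$ is exactly the Poisson kernel of the upper half-plane with pole at $\theta^{\prime}$, so $\int_{\mathbb{R}} p(x;\theta^{\prime}) u(x)\,dx = u(\theta^{\prime})$ for harmonic functions $u$ on $\mathbb{H}$ of controlled growth. For $w = \mu_w + \sigma_w i \in \mathbb{H}$ the function $z \mapsto \log|z-\overline{w}|$ is harmonic on all of $\mathbb{H}$, because its only singularity $\overline{w}$ lies in the lower half-plane, and its boundary trace on $\mathbb{R}$ is $\log|x-\overline{w}| = \log|x-w|$ since $|x-\overline{w}|=|x-w|$ for real $x$. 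Hence I would conclude $J(\theta^{\prime},w) = \log|\theta^{\prime}-\overline{w}|$; specializing gives $J(\theta^{\prime},\theta) = \log|\theta^{\prime}-\overline{\theta}|$ and $J(\theta^{\prime},\theta^{\prime}) = \log|\theta^{\prime}-\overline{\theta^{\prime}}| = \log(2\sigma^{\prime})$.

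The main obstacle is the rigorous justification of this Poisson representation for the logarithmically growing harmonic function $z\mapsto\log|z-\overline{w}|$: the Dirichlet problem on the unbounded domain $\mathbb{H}$ is not unique without a growth restriction, so one must rule out an additional harmonic term vanishing on $\mathbb{R}$ (prototypically a multiple of $\textup{Im}(z)$). I would dispose of this either by a Phragmén--Lindelöf argument using that the difference is harmonic, vanishes on $\mathbb{R}$ and is sublinear, or, more cleanly, by differentiating $J(\theta^{\prime},w)$ in the real and imaginary parts of $w$: this produces absolutely convergent integrals of rational functions evaluated by a single residue, and integrating back both establishes $J(\theta^{\prime},w)=\log|\theta^{\prime}-\overline{w}|$ and sidesteps the growth issue. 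Finally I would assemble $K = \log(\sigma^{\prime}/\sigma) + 2\log|\theta^{\prime}-\overline{\theta}| - 2\log(2\sigma^{\prime})$ and simplify using $|\theta^{\prime}-\overline{\theta}|^2 = (\mu-\mu^{\prime})^2 + (\sigma+\sigma^{\prime})^2$; the identity $(\sigma+\sigma^{\prime})^2 = (\sigma-\sigma^{\prime})^2 + 4\sigma\sigma^{\prime}$ turns this into $\log\frac{(\mu-\mu^{\prime})^2+(\sigma+\sigma^{\prime})^2}{4\sigma\sigma^{\prime}} = \log\bigl(1 + \frac{(\mu-\mu^{\prime})^2+(\sigma-\sigma^{\prime})^2}{4\sigma\sigma^{\prime}}\bigr)$, which is exactly \eqref{KL} since $|\theta-\theta^{\prime}|^2 = (\mu-\mu^{\prime})^2+(\sigma-\sigma^{\prime})^2$ and $\textup{Im}(\theta)\,\textup{Im}(\theta^{\prime}) = \sigma\sigma^{\prime}$.
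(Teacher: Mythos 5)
Your proof is correct, and it takes a genuinely different route from the paper for the simple reason that the paper contains no proof of this proposition at all: it quotes the formula from Chyzak--Nielsen \cite{Chyzak2019} (noting that a version already appears in \cite{McCullagh1996}), so your argument supplies a self-contained derivation where the paper defers to the literature. Your decomposition $K=\log(\sigma'/\sigma)+2J(\theta',\theta)-2J(\theta',\theta')$ with $J(\theta',w)=\int_{\mathbb{R}}p(x;\theta')\log|x-w|\,dx$ is sound, and your key lemma $J(\theta',w)=\log\left|\theta'-\overline{w}\right|$ is precisely the harmonic (real-part) analogue of the residue identity \eqref{residue} that the paper itself uses for holomorphic integrands: $\log|z-\overline{w}|=\textup{Re}\log(z-\overline{w})$ has its only singularity at $\overline{w}$ in the open lower half-plane, its trace on $\mathbb{R}$ is $\log|x-w|$, and $p(\cdot\,;\theta')$ is the Poisson kernel at $\theta'$. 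You correctly identify the one delicate point (non-uniqueness of the Dirichlet problem on $\mathbb{H}$), and both of your proposed repairs work: (a) the difference of the two sides is harmonic on $\mathbb{H}$, vanishes continuously on $\mathbb{R}$, and is $O(\log|z|)$, so Schwarz reflection extends it to an entire harmonic function of sublinear growth, which is constant by the harmonic Liouville theorem, hence identically zero; (b) differentiating under the integral gives $\partial_{\mu_w}J=-\textup{Re}\,E^{\theta'}\!\left[1/(X_1-\overline{w})\right]=-\textup{Re}\left(1/(\theta'-\overline{w})\right)$ (and similarly in $\sigma_w$), matching the gradient of $\log|\theta'-\overline{w}|$; here you should add one line pinning down the constant of integration, e.g.\ by letting $\textup{Im}(w)\to\infty$, where dominated convergence shows both sides equal $\log\textup{Im}(w)+o(1)$. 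The final algebra, $|\theta'-\overline{\theta}|^2=|\theta-\theta'|^2+4\,\textup{Im}(\theta)\textup{Im}(\theta')$, then yields exactly \eqref{KL}. What your route buys is a short, elementary proof entirely within the complex-analytic toolkit the paper already deploys (and it makes transparent why the answer is a function of the half-plane point-pair invariant); what the citation buys the authors is only brevity.
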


We remark that a version of \eqref{KL} already appears in \cite[Eq.(18)]{McCullagh1996}.  
See also the discussions around \eqref{MC14} and \eqref{rate-analysis} below. 
Let 
\[ SL(2, \mathbb R) := \left\{  \begin{pmatrix} a & b \\ c & d \end{pmatrix} : a, b, c, d \in \mathbb{R}, ad - bc = 1 \right\}. \]
The quantity $ \frac{| \theta - \theta^{\prime}|^2}{4 \textup{Im}(\theta) \textup{Im}(\theta^{\prime})}$ in \eqref{KL} is the maximal invariant for the action of $SL(2, \mathbb{R})$ to $\mathbb H \times \mathbb H$ defined by 
\[ A \cdot (z,w) = \left(\frac{az+b}{cz+d},  \frac{aw+b}{cw+d}\right), \ \ A = \begin{pmatrix} a & b \\ c & d \end{pmatrix} \in SL(2, \mathbb{R}), \ \ z,w \in \mathbb{H}. \] 
See also \cite{McCullagh1993, McCullagh1996}. 
In particular we see that 
\[ K(P_{A \cdot \theta^{\prime}}| P_{A \cdot \theta}) =K(P_{\theta^{\prime}}| P_{\theta}). \]

Let 
\[ b(\epsilon, \theta) := \inf\left\{K(P_{\theta^{\prime}}| P_{\theta}) : \theta^{\prime} \in \mathbb{H},  |\theta^{\prime} - \theta| > \epsilon \right\}.  \]
Then, by \eqref{KL}, 
\begin{equation}\label{b-formula} 
b(\epsilon, \theta) = \log\left(1+ \frac{\epsilon^2}{4 \textup{Im}(\theta) (\textup{Im}(\theta) + \epsilon)} \right). 
\end{equation} 
This does not depend on the location $\mu$. 
In particular,  
\begin{equation}\label{b-order}  
\lim_{\epsilon \to +0} \frac{b(\epsilon, \theta)}{\epsilon^2} = \frac{1}{4 \textup{Im}(\theta)^2}.  
\end{equation} 

Bahadur, Gupta and Zabell  \cite{Bahadur1980} showed that for every consistent estimator $T_n$ for each $\theta$, 
\begin{equation*}
 \liminf_{n \to \infty} \frac{\log P(|T_n - \theta| >  \epsilon)}{n} \ge -b(\epsilon, \theta). 
\end{equation*}  
See \cite[Eq. (4)]{Arcones2006} or \cite[Proposition 1]{Shen2001}. 
It is a natural question whether we can show that 
\begin{equation}\label{Bahadur-equal} 
\liminf_{n \to \infty} \frac{\log P(|T_n - \theta| >  \epsilon)}{n} = -b(\epsilon, \theta), 
\end{equation} 
in the case that $(T_n)_n$ is the MLE.  
Kester and Kallenberg \cite{Kester1986} gave an example of the MLE which does  not satisfy  \eqref{Bahadur-equal}.

The following theorem along with the above estimate indicates that the MLE is best in the sense of Bahadur efficiency. 

\begin{Thm}[Bahadur efficiency of the MLE]\label{Bahadur-MLE}
\[ \limsup_{\epsilon \to +0} \limsup_{n \to \infty} \frac{\log P(|\hat{\theta}_n - \theta| >  \epsilon)}{n b(\epsilon, \theta)} = -1. \] 
\end{Thm}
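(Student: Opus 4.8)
The plan is to prove separately that the double limit is $\ge-1$ and $\le-1$. The lower bound is immediate: the MLE is consistent, so the Bahadur--Gupta--Zabell inequality recalled above gives $\liminf_{n\to\infty}\frac1n\log P(|\hat\theta_n-\theta|>\epsilon)\ge-b(\epsilon,\theta)$ for every $\epsilon\in(0,\sigma)$. Since $b(\epsilon,\theta)>0$ by \eqref{b-formula}, dividing by $n\,b(\epsilon,\theta)$ and using $\limsup_n\ge\liminf_n$ yields $\limsup_{n\to\infty}\frac{\log P(|\hat\theta_n-\theta|>\epsilon)}{n\,b(\epsilon,\theta)}\ge-1$ for each such $\epsilon$, whence $\limsup_{\epsilon\to+0}$ of the same quantity is $\ge-1$.

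The content is the upper bound. Let $\psi(x,\alpha):=\nabla_\alpha\log p(x;\alpha)$ be the score (whose complex form is a multiple of $h(x,\alpha)$), and for $\alpha$ on the sphere $S_\epsilon:=\{|\alpha-\theta|=\epsilon\}$ let $u_\alpha:=(\alpha-\theta)/\epsilon$ be the outward unit normal. The first step uses Copas' unimodality of the likelihood: if $\hat\theta_n$ lies outside $\overline{B(\theta,\epsilon)}$, then along the segment from $\theta$ to $\hat\theta_n$ the log-likelihood is still nondecreasing where it crosses $S_\epsilon$, so the outward directional derivative of $\frac1n\sum_{j=1}^n\log p(X_j;\cdot)$ is nonnegative at some $\alpha\in S_\epsilon$. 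Hence
\[
\{|\hat\theta_n-\theta|>\epsilon\}\subseteq\Bigl\{\sup_{\alpha\in S_\epsilon}\Bigl\langle\tfrac1n\textstyle\sum_{j=1}^n\psi(X_j,\alpha),u_\alpha\Bigr\rangle\ge0\Bigr\}.
\]
This replaces the unbounded parameter space $\mathbb H$ by the compact sphere $S_\epsilon$. Since $|x-\alpha|=|x-\overline\alpha|$ for real $x$ gives $|h(x,\alpha)|=1$, the projected score $\langle\psi(x,\alpha),u_\alpha\rangle$ is bounded, so its cumulant generating function is finite everywhere and a Chernoff/G\"artner--Ellis estimate applies at each $\alpha$. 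Covering $S_\epsilon$ by a finite net and combining the per-point Chernoff bounds with a union bound then yields $\limsup_{n\to\infty}\frac1n\log P(|\hat\theta_n-\theta|>\epsilon)\le-J(\epsilon,\theta)$, where $J(\epsilon,\theta):=\inf_{\alpha\in S_\epsilon}\Lambda^\ast_\alpha(0)$ and $\Lambda^\ast_\alpha$ is the Legendre--Fenchel transform of the cumulant generating function of $\langle\psi(X_1,\alpha),u_\alpha\rangle$ under $P_\theta$. Making this reduction and the net estimate rigorous is where I would invoke the $M$-estimator large-deviation framework of Arcones \cite{Arcones2006}, supplemented by Shen \cite{Shen2001} for the joint location--scale case, since \cite[Theorem 3.5]{Arcones2006} does not apply directly when both parameters are unknown.

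It remains to compare $J(\epsilon,\theta)$ with $b(\epsilon,\theta)$ as $\epsilon\to+0$, and this is the crux. One must not use the likelihood-ratio statistic $\frac1n\sum\log\frac{p(X_j;\alpha)}{p(X_j;\theta)}\ge0$ at a fixed boundary point: its rate is the Chernoff information, which to leading order equals $\tfrac18(\alpha-\theta)^{\top}\mathcal I(\theta)(\alpha-\theta)$ with $\mathcal I(\theta)$ the Fisher information, only a quarter of $b(\epsilon,\theta)$, reflecting the Kester--Kallenberg failure of exact Bahadur equality. The projected-score statistic above has instead mean $\langle E_\theta[\psi(\cdot,\alpha)],u_\alpha\rangle\sim-\epsilon\,u_\alpha^{\top}\mathcal I(\theta)u_\alpha$ and variance $\sim u_\alpha^{\top}\mathcal I(\theta)u_\alpha$, so $\Lambda^\ast_\alpha(0)\sim\tfrac12\epsilon^2\,u_\alpha^{\top}\mathcal I(\theta)u_\alpha$. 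Because the Cauchy Fisher information is isotropic, $\mathcal I(\theta)=\frac{1}{2\,\textup{Im}(\theta)^2}\,\mathrm{Id}_2$, the infimum over $u_\alpha$ is direction-free and $J(\epsilon,\theta)\sim\epsilon^2/(4\,\textup{Im}(\theta)^2)$, which matches $b(\epsilon,\theta)$ by \eqref{b-order}; the explicit formula \eqref{KL} of Proposition \ref{CN} is what makes this matching of leading constants transparent. Thus $\liminf_{\epsilon\to+0}J(\epsilon,\theta)/b(\epsilon,\theta)\ge1$, giving $\limsup_{\epsilon\to+0}\limsup_{n\to\infty}\frac{\log P}{n\,b}\le-1$, and with the first paragraph the value is exactly $-1$. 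The main obstacle is precisely this leading-order matching: one must show the per-point Chernoff rate of the projected score agrees with the Kullback--Leibler rate $b$ up to $o(\epsilon^2)$ uniformly over $S_\epsilon$, the discrepancy being absorbed by the outer $\limsup_{\epsilon\to+0}$ and the normalization by $b(\epsilon,\theta)$.
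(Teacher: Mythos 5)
Your proposal is correct in substance but follows a genuinely different route from the paper's. The paper proves the upper bound in two outsourced steps: it first establishes LD-consistency, i.e.\ $P(|\hat{\theta}_n-\theta|\ge\epsilon)=O(\beta^n)$ for every \emph{fixed} $\epsilon>0$ (Proposition \ref{LD-consistent}), via Arcones' $M$-estimator machinery --- an exponential bound forcing $\hat{\theta}_n$ into a compact set (Lemma \ref{LDP-cpt}) plus a functional large deviation principle for the score process $t\mapsto\frac1n\sum_j h_i(X_j,t)$ in $\ell_\infty(K)$ (Lemma \ref{LDP-h}) --- and then feeds this into Shen's Theorem 3 (after checking the regularity conditions (C1), (C2) on $\log p$), which is a general theorem performing precisely the $\epsilon\to+0$ matching that you carry out by hand. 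Your route replaces both steps: the reduction to the compact sphere $S_\epsilon$ via uniqueness of the critical point (Copas) makes the compactification lemma and the functional LDP on $\ell_\infty(K)$ unnecessary, and your second-order analysis of the projected score --- mean $\sim-\epsilon\,u^{\top}\mathcal I(\theta)u$, variance $\sim u^{\top}\mathcal I(\theta)u$, hence Chernoff exponent $\sim\tfrac12\epsilon^2u^{\top}\mathcal I(\theta)u=\epsilon^2/(4\,\textup{Im}(\theta)^2)$, which matches \eqref{b-order} by isotropy of the Cauchy Fisher information --- is a self-contained substitute for Shen's theorem in this particular model. Your warning that the fixed-alternative likelihood-ratio bound only produces the Chernoff-information constant (a quarter of $b$) is exactly the right diagnosis of where naive arguments lose the constant. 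What the paper's route buys is that it uses only the root property of $\hat{\theta}_n$ (not unimodality) and hides the delicate local expansion inside a citation; what your route buys is transparency about why the constant is exactly $1$, and, as a byproduct, LD-consistency for every fixed $\epsilon\in(0,\sigma)$: writing the complex score as $i\,h(x,\alpha)/\textup{Im}(\alpha)$, the projected mean score $\textup{Re}\bigl(i\,h(\theta,\alpha)\,\overline{u_\alpha}\bigr)/\textup{Im}(\alpha)$ is strictly negative on \emph{all} of $S_\epsilon$, not merely to leading order as $\epsilon\to+0$.

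One repair is needed. Your justification of the inclusion $\{|\hat{\theta}_n-\theta|>\epsilon\}\subseteq\bigl\{\sup_{\alpha\in S_\epsilon}\langle\frac1n\sum_j\psi(X_j,\alpha),u_\alpha\rangle\ge0\bigr\}$ --- that the log-likelihood is nondecreasing along the segment from $\theta$ to $\hat{\theta}_n$ where it crosses $S_\epsilon$ --- does not follow from unimodality: a function whose unique critical point is its global maximum need not be monotone along straight segments into that maximum. The inclusion itself is true, by the standard boundary-maximum argument: if $\hat{\theta}_n\notin\overline{B(\theta,\epsilon)}$, then the log-likelihood $\ell_n:=\sum_j\log p(X_j;\cdot)$ has no critical point in the closed ball, so its maximum over the ball is attained at some $\alpha^*\in S_\epsilon$; at $\alpha^*$ every inward directional derivative of $\ell_n$ is $\le0$, hence $\langle\nabla\ell_n(\alpha^*),u_{\alpha^*}\rangle\ge0$. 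With that substitution, and granting the (routine but necessary) uniformity of the net/Chernoff estimate over $S_\epsilon$, which you reasonably delegate to the Arcones--Shen framework, your argument goes through and yields both directions of the equality.
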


Bai-Fu \cite{Bai1987} considered the MLE of location $\mu$ in the case that the scale parameter $\sigma$ is known. 
In that case the likelihood equation is given by 
\begin{equation}\label{multiple}
\sum_{j=1}^{n} \frac{X_j - \mu}{(X_j - \mu)^2 + \sigma^2} = 0 
\end{equation} 
and this equation for $\mu$ has multiple solutions (Reeds \cite{Reeds1985}), so from this equation itself, 
we cannot see which $\mu$ of the solutions gives the maximum of the log-likelihood.  
 \cite[Eq. (2.12) and (2.13)]{Bai1987} shows that 
\[ \lim_{n \to \infty} \frac{\log P(|\hat{\theta}_n - \mu| >  \epsilon)}{n} = -\beta(\epsilon),  \]
where 
\[ \beta(\epsilon) = \frac{ \epsilon^2}{4 \sigma^2} \left(1 + O(\sqrt{\epsilon})\right). \]
However, by \cite[Theorem 3.2]{Arcones2006}, it holds that $\beta(\epsilon) \ne b(\epsilon, \sigma i)$.  

Our strategy of the proof of  Theorem \ref{Bahadur-MLE} is to firstly establish that $P(|\hat{\theta}_n - \theta| > \epsilon)$ decays exponentially for every $\epsilon > 0$, and then to adopt Shen \cite[Theorem 3]{Shen2001}. 
\cite{McCullagh1996} obtains for an asymptotic pointwise lower bound for the density function $p_n (\chi)$ of the MLE $\hat{\theta}_n$ with respect to an invariant measure on $\mathbb{H}$ for the action of the special linear group $SL_2 (\mathbb{R})$, under the assumption of existence of the continuous density function. 
However it might not lead any estimates for the upper bound of $P\left(|\hat{\theta}_n - \theta| > \epsilon\right)$. 

We still conjecture that the rate function in \cite[Theorem 3.8]{Arcones2006} is correct, although it is not applicable to the Cauchy distribution in its form. 
The rate function is defined by 
\[ I_{\theta}(\gamma) :=  -\inf_{\lambda_1, \lambda_2 \in \mathbb{R}} \log E^{\theta}\left[\exp\left(\lambda_1 \textup{Re}\left( h(x, \gamma) \right) + \lambda_2 \textup{Im}\left( h(x, \gamma) \right) \right)\right], \] 
where $\gamma, \theta \in \mathbb H$ and $E^{\theta}$ denotes the expectation with respect to $P_{\theta}$.  
Let $\gamma = \gamma_1 + i \gamma_2$.
This might be the limit of $-\log p_n (\chi) /n$, 
where $\chi := \dfrac{(\gamma_1 - \mu)^2 + (\gamma_2 - \sigma)^2}{4 \gamma_2 \sigma}$.

\cite[p.801]{McCullagh1996} states that 
\begin{equation}\label{MC14} 
\liminf_{n \to \infty} \frac{\log p_n \left(\chi \right)}{n} \ge -\log\left(1+  \frac{(\gamma_1 - \mu)^2 + (\gamma_2 - \sigma)^2}{4 \gamma_2 \sigma}\right) = -K\left(P_{\gamma} | P_{\theta}\right). 
\end{equation}

Since it holds that for Borel measureble set $A$, 
\[ P_{\theta}(\hat{\theta}_n \in A) = \iint_A \frac{1}{4\pi t_2^2} p_n \left( \frac{(t_1 - \mu)^2 + (t_2 - \sigma)^2}{4 t_2 \sigma} \right) dt_1 dt_2, \]
we would naturally expect that for sufficiently small $\epsilon > 0$, 
\begin{equation}\label{rate-analysis} 
\frac{\log p_{n} \left( \chi \right) }{n} \approx \frac{\log P_{\theta}(\hat{\theta}_n \in B(\gamma, \epsilon))}{n} \approx -I_{\theta}(\gamma), \ \ n \to \infty.  
\end{equation}

By \cite[Eq.(33)]{Arcones2006}, 
we see that $K(P_{\gamma} | P_{\theta}) \ge I_{\theta}(\gamma)$, which is consistent with \eqref{MC14} and \eqref{rate-analysis}. 
If the conjecture \cite[Eq.(18)]{McCullagh1996} is true, then, $K(P_{\gamma} | P_{\theta}) = I_{\theta}(\gamma)$. 
Furthermore, we can show that 
$ I_{A \cdot \theta}(A \cdot \gamma) =  I_{\theta}(\gamma)$ for every $A \in SL(2, \mathbb R)$ 
by the same technique as in the proof of \cite[Lemma 2.3]{Okamura2020a}.

\section{Bahadur efficiency of  the one-step estimator of quasi-arithmetic means}

For ease of notation, we let 
\begin{equation}\label{def-Yn}
Y_n := f^{-1} \left( \frac{1}{n} \sum_{i=1}^{n} f(X_i)\right). 
\end{equation}


Let 
\[ \Psi(x; \theta) = \Psi(x; (\mu, \sigma)) := \begin{pmatrix} \frac{\partial}{\partial \mu} \log p(x; (\mu, \sigma)) \\ \frac{\partial}{\partial \sigma} \log p(x; (\mu, \sigma)) \end{pmatrix}.  \]
Let $I_n (\theta) = I_n (\mu, \sigma):= \frac{n}{2\sigma^2} I_2$, that is, the Fisher information matrix for the Cauchy sample of size $n$. 
Now we regard $Y_n$ as an $\mathbb R^2$-valued random variable. 
Then, a version of the one-step estimator of $Y_n$ is given by 
\[ Z_n := Y_n - I_n (Y_n)^{-1} \sum_{j=1}^{n} \Psi(X_j; Y_n).  \]

Now we rewrite this in terms of the complex parametrization. 
By recalling the definition of $h$, 
\[  \Psi(x; \theta) = \frac{1}{\textup{Im}(\theta) |x-\theta|^2}  \begin{pmatrix} \textup{Im}((x-\theta)^2) \\ \textup{Re}((x-\theta)^2)\end{pmatrix} = \frac{1}{\textup{Im}(\theta)}  \begin{pmatrix} \textup{Im}(h(x, \theta)) \\  \textup{Re}(h(x, \theta)) \end{pmatrix}. \]
Now we see that 
\begin{equation}\label{def-Zn} 
Z_n = Y_n + \frac{2 \textup{Im}(Y_n) i}{n} \sum_{j =1}^{n} h(X_j, Y_n)  = Y_n + \frac{2 \textup{Im}(Y_n) i}{n} \sum_{j =1}^{n} \frac{X_j - Y_n}{X_j - \overline{Y_n}}. 
\end{equation} 

The main feature is that the initial estimators $(Y_n)_n$ and the one-step estimators $(Z_n)_n$ have closed-form. 
Therefore it is easy to compute and do not need numerical approximations, which is contrary to the MLE $(\hat{\theta}_n)_n$. 
If $\textup{Var}(f(X_1)) < +\infty$, 
then, by the central limit theorem for $(Y_n)$, which is stated in \cite[Theorem 1.5]{Akaoka2021-1}, 
$(Y_n)_n$ is a $\sqrt{n}$-consistent estimator. 
Then it holds that 
\begin{equation}\label{Z-Fisher} 
\sqrt{n} (Z_n - \theta) \Rightarrow N\left(0, \ \frac{1}{2\sigma^2} I_2  \right), \ n \to \infty, 
\end{equation}
where $\Rightarrow$ means the convergence in distribution, $I_2$ denotes the identity matrix of degree 2, and, $N(\cdot, \cdot)$ is the 2-dimensional normal distribution.     
See Lehmann \cite[Section 7.3]{Lehmann1999} for details.

The one-step estimators are easily obtained by efficient estimators. 
In general, the difficulty arising from multiple roots of the likelihood equations, as in \eqref{multiple} above, is overcome by a one-step estimator which only requires a $\sqrt{n}$-consistent estimator in the initial point.   
The following is the main result of this paper. 
\begin{Thm}[Bahadur efficiency of  the one-step estimator]\label{main}
If \eqref{exp-moment} holds, then, 
\[ \limsup_{\epsilon \to +0} \limsup_{n \to \infty} \frac{\log P\left(|Z_n - \theta| >  \epsilon \right)}{n b(\epsilon, \theta)} \le -1. \] 
\end{Thm}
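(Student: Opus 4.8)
The plan is to exploit the Newton-step structure of $Z_n$: the one-step correction makes the \emph{deterministic} part of the map quadratically flat at the true parameter, so that the deviation of $Z_n$ is governed to leading order by a centred empirical field evaluated near $\theta$, whose Chernoff rate matches $b(\epsilon,\theta)$. Concretely, write $\bar h(t) := E^{\theta}[h(X_1,t)]$; a residue computation gives $\bar h(t) = (\theta - t)/(\theta - \overline{t})$, so $\bar h(\theta) = 0$. Introduce the centred empirical field $W_n(t) := \frac1n\sum_{j=1}^n h(X_j,t) - \bar h(t)$ and the deterministic one-step map $\Phi(t) := t + 2\,\textup{Im}(t)\, i\,\bar h(t)$. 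Then \eqref{def-Zn} yields the exact identity $Z_n - \theta = (\Phi(Y_n) - \theta) + 2\,\textup{Im}(Y_n)\,i\,W_n(Y_n)$ with $\Phi(\theta) = \theta$, and the whole argument rests on estimating the two summands separately.

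Two local facts drive the proof. (i) A Taylor expansion of $\Phi$ at $\theta$ shows that the linear term cancels, so $|\Phi(t) - \theta| \le C|t - \theta|^2$ on a fixed neighbourhood of $\theta$; this quadratic flatness is exactly the gain of the one-step construction. (ii) Since $|h(x,t)| = 1$ for real $x$ the summands are bounded, and under $P_\theta$ the law of $h(X_1,\theta)$ is uniform on the unit circle with covariance $\tfrac12 I_2$; hence the two-dimensional Cram\'er rate function $\Lambda_t^{\ast}$ of $\frac1n\sum_{j}(h(X_j,t)-\bar h(t))$ satisfies $\Lambda_\theta^{\ast}(r) \sim r^2$ as $r\to0$, and by continuity $\inf_{|t-\theta|\le\delta}\Lambda_t^{\ast}(r) = r^2(1+o_\delta(1))$.

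With these in hand I would fix $\epsilon$ and an auxiliary radius $\delta = \delta(\epsilon)$ with $\epsilon \ll \delta \ll \sqrt{\epsilon}$ (e.g. $\delta = \epsilon^{2/3}$) and split
\[ P(|Z_n - \theta| > \epsilon) \le P(|Y_n - \theta| > \delta) + P\big(|Z_n - \theta| > \epsilon,\ |Y_n - \theta| \le \delta\big). \]
Theorem \ref{LDP-chernoff} bounds the first term by $c_1\exp(-c_2 n\delta^2)$, which is negligible on the exponential scale because $c_2\delta^2/b(\epsilon,\theta) \to \infty$ by \eqref{b-order}. On the good event, fact (i) and $\textup{Im}(Y_n)\le\sigma+\delta$ give $|Z_n-\theta|\le C\delta^2 + 2(\sigma+\delta)\sup_{|t-\theta|\le\delta}|W_n(t)|$, so the second term is at most $P\big(\sup_{|t-\theta|\le\delta}|W_n(t)| > r\big)$ with $r = r(\epsilon) = (\epsilon - C\delta^2)/(2(\sigma+\delta)) \sim \epsilon/(2\sigma)$.

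It remains to control this supremum and to conclude. Because $|x - \overline{t}| \ge \textup{Im}(t)$ is bounded below on the good region, $t \mapsto h(x,t)$ is Lipschitz uniformly in $x \in \mathbb{R}$, as is $\bar h$; covering $\{|t-\theta|\le\delta\}$ by an $\eta$-net of polynomial cardinality $N$ with $\eta$ small enough that the Lipschitz oscillation is $o(\epsilon)$, a union bound gives $P(\sup|W_n|>r) \le N\max_k P(|W_n(t_k)| > r - o(\epsilon))$. A Chernoff bound for bounded i.i.d. summands controls each term with rate $\inf_k\Lambda_{t_k}^{\ast}(r-o(\epsilon)) = r^2(1+o_\delta(1)) \sim \epsilon^2/(4\sigma^2)$, while $\frac1n\log N\to0$. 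Hence $\limsup_n \frac1n\log P(|Z_n-\theta|>\epsilon) \le -r^2(1+o_\delta(1))$, and dividing by $b(\epsilon,\theta)\sim\epsilon^2/(4\sigma^2)$ and letting $\epsilon\to0$ gives the claimed bound $-1$. I expect the main obstacle to be precisely this uniform-in-$t$ large-deviation control of $W_n$ together with the bookkeeping of the sharp constant $2\sigma$: the discretisation error and the quadratic remainder $\Phi(Y_n)-\theta$ must be shown to cost only a factor $1+o(1)$ in the exponent rather than degrading the rate.
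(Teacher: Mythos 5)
Your proposal is correct, and it takes a genuinely different route from the paper. The paper pivots on the MLE: it splits $P(|Z_n-\theta|>\epsilon) \le P(|Z_n-\hat{\theta}_n|>c_1\epsilon)+P(|\hat{\theta}_n-\theta|>(1-c_1)\epsilon)$, imports the sharp rate $b(\epsilon,\theta)$ from Theorem \ref{Bahadur-MLE} (which itself rests on Arcones' function-space LDP and Shen's theorem), and then only needs the \emph{crude} bound $P(|Z_n-\hat{\theta}_n|>\epsilon)=O(e^{-n\epsilon^{3/2}})$, proved from the likelihood equation $\sum_j h(X_j,\hat{\theta}_n)=0$ and Chernoff estimates like \eqref{tail-Y}--\eqref{tail-Cayley-2}. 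You never touch the MLE: your identity $Z_n-\theta=(\Phi(Y_n)-\theta)+2\,\textup{Im}(Y_n)\,i\,W_n(Y_n)$ is exact, and your quadratic-flatness claim is in fact an algebraic identity rather than a Taylor estimate, since with $\bar h(t)=(\theta-t)/(\theta-\overline{t})$ one computes
\[ \Phi(t)-\theta \;=\; -\,\frac{(t-\theta)^2}{\theta-\overline{t}}, \qquad\text{so}\qquad |\Phi(t)-\theta|\;\le\;\frac{|t-\theta|^2}{\textup{Im}(\theta)} \quad \text{for all } t\in\mathbb{H}. \]
Hence $\delta=\epsilon^{2/3}$ does make the deterministic part cost $o(\epsilon)$ while Theorem \ref{LDP-chernoff} renders $P(|Y_n-\theta|>\delta)$ negligible on the scale $n b(\epsilon,\theta)\sim n\epsilon^2/(4\,\textup{Im}(\theta)^2)$; and the sharp constant emerges exactly as you say, because $h(X_1,\theta)$ is uniform on the unit circle, the centred field has covariance $\tfrac12 I_2$ at $\theta$ (continuous in $t$ by boundedness), so the local Cram\'er rate at threshold $r\sim\epsilon/(2\sigma)$ is $\epsilon^2/(4\sigma^2)(1+o(1))$, matching \eqref{b-order}. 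The only step requiring real care is the one you flag: the uniform-in-$t$ sharp bound for the two-dimensional event $|W_n(t_k)|>r'$. Since your spatial net has cardinality independent of $n$, this closes either by invoking the asymptotic Cram\'er upper bound at each net point, or by a further finite net of $m$ directions with one-dimensional Bernstein/Chernoff bounds, losing a factor $\cos^2(\pi/m)$ that vanishes upon letting $m\to\infty$ after $\epsilon\to 0$. Comparing the two approaches: the paper's is modular --- any initial estimator with exponential tails inherits Bahadur efficiency from the MLE --- but it carries the heavy machinery of Sections 3 and 6.2; yours is self-contained and more elementary, exposes the actual mechanism (Newton-step quadratic contraction plus a bounded score field whose local covariance reproduces precisely $b(\epsilon,\theta)$), and would apply even where MLE theory is awkward, at the cost of the sharp-constant uniform large-deviation bookkeeping that the paper's crude $O(e^{-n\epsilon^{3/2}})$ estimate lets it skip.
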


The proof is done by estimating $Z_n - \hat{\theta}_n$.  
This is different from the consideration of one-step estimators by \cite{Janssen1985}  in the one-dimensional case. 
We might be able to consider it  for not only the Cauchy distribution but also other distributions, 
if the tail of the starting point estimator decays exponentially fast. 

Let $(T_n)_n$ be a sequence of complex-valued {\it unbiased} estimators of $\mu + \sigma i$, where $T_n$  is an unbiased estimator for samples of size $n$. 
Then, $\textup{Re}(T_n)$ and $\textup{Im}(T_n)$ are unbiased estimators of $\mu$ and $\sigma$ respectively. 
By the Cram\'er-Rao inequality, 
\begin{equation}\label{CR}
n \textup{Var}(T_n)  \ge 4 \textup{Im}(\theta)^2.   
\end{equation}  

The following theorem shows $(Z_n)_n$ achieves the lower bound of \eqref{CR}, although it may not be unbiased. 

\begin{Thm}[Variance asymptotics for the one-step estimator]\label{main-var}
If 
$$E\left[ \left|f(X_1)\right|^2 \right] < +\infty$$ 
and 
\begin{equation}\label{main-var-ass} 
\lim_{n \to \infty} n\textup{Var}\left( Y_n \right)  = \frac{\textup{Var}(f(X_1))}{\left|f^{\prime}(\theta) \right|^2}, 
\end{equation} 
 then, 
 \[ \lim_{n \to \infty} n E\left[ \left| Z_n - \theta \right|^2 \right] = 4 \, \textup{Im}(\theta)^2, \] 
 in particular, 
\[ \lim_{n \to \infty} n\textup{Var}\left( Z_n \right)  \le 4 \, \textup{Im}(\theta)^2. \] 
\end{Thm}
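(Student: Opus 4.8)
The plan is to extract from $Z_n-\theta$ a single average of i.i.d.\ terms that already carries the full limiting second moment, and then to show that the scaled $L^2$-norm of the remainder tends to $0$. I set
\[ M_n := 2i\,\textup{Im}(\theta)\,\frac{1}{n}\sum_{j=1}^{n} h(X_j,\theta). \]
A residue computation shows $E^{\theta}[h(X_1,\theta)]=0$, while $|h(X_j,\theta)|=1$ identically because $X_j$ is real and $\theta\in\mathbb{H}$. Hence $\bigl(h(X_j,\theta)\bigr)_j$ are i.i.d., centred, of modulus one, and $nE[|M_n|^2]=4\,\textup{Im}(\theta)^2$ for every $n$. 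The theorem therefore reduces to
\[ \lim_{n\to\infty} nE\bigl[\,|Z_n-\theta-M_n|^2\,\bigr]=0, \]
after which the cross term in $nE[|Z_n-\theta|^2]$ is dispatched by Cauchy--Schwarz against $nE[|M_n|^2]=4\,\textup{Im}(\theta)^2$.

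To organise $Z_n-\theta-M_n$ I would use two exact identities. From $E^{\theta}[h(X_1,t)]=h(\theta,t)$ and
\[ h(x,t)-h(\theta,t)=\frac{2i\,\textup{Im}(t)\,(x-\theta)}{(x-\overline{t})(\theta-\overline{t})}, \]
the one-step correction is rewritten through the average $\frac1n\sum_j\frac{X_j-\theta}{X_j-\overline{t}}$ evaluated at $t=Y_n$; and from
\[ Y_n+2i\,\textup{Im}(Y_n)\,h(\theta,Y_n)-\theta=-\frac{(Y_n-\theta)^2}{\theta-\overline{Y_n}} \]
the purely quadratic dependence on $Y_n-\theta$ is split off. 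Subtracting $M_n$, which is (up to a scalar factor) that same average evaluated at $t=\theta$, leaves three pieces: one quadratic in $Y_n-\theta$, one comparing the average at $Y_n$ with the average at $\theta$, and one comparing the scalar prefactors at $Y_n$ and at $\theta$.

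The hard part is that the hypotheses supply only $E[|f(X_1)|^2]<\infty$, so $Y_n-\theta$ is controlled merely in $L^2$ --- one has $\|Y_n-\theta\|_2=O(n^{-1/2})$ from \eqref{main-var-ass} and the central limit theorem for $(Y_n)$, but no control of fourth moments --- and the three pieces, which carry denominators $X_j-\overline{Y_n}$ that degenerate when $\textup{Im}(Y_n)$ is small, cannot be estimated in $L^2$ term by term. I would resolve this by splitting on $G_n:=\{\,|Y_n-\theta|\le\delta\,\}$ and its complement $B_n$, for a fixed $\delta\in\bigl(0,\textup{Im}(\theta)/2\bigr)$. The decisive input is the uniform integrability of $\{\,n|Y_n-\theta|^2\,\}_{n}$, which follows from \eqref{main-var-ass} and the central limit theorem for $(Y_n)$ (nonnegative variables converging in distribution with converging, matching means), and which yields both $nP(B_n)\to0$ and $nE[|Y_n-\theta|^2\mathbf{1}_{B_n}]\to0$.

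On $B_n$ I would avoid the decomposition entirely: since $\textup{Im}(Y_n)\ge0$ and $|h|\le1$, the crude bound $|Z_n-\theta|\le 2\,\textup{Im}(Y_n)+|Y_n-\theta|\le C\bigl(1+|Y_n-\theta|\bigr)$ gives $nE[|Z_n-\theta-M_n|^2\mathbf{1}_{B_n}]\le CnE[(1+|Y_n-\theta|^2)\mathbf{1}_{B_n}]\to0$. On $G_n$ one has $\textup{Im}(Y_n)\ge\textup{Im}(\theta)-\delta$, so every denominator $|X_j-\overline{Y_n}|$ is bounded below and $|Y_n-\theta|\le\delta$ may be used in $L^\infty$. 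The key manoeuvre is, in each of the three pieces, to peel off a \emph{bounded, centred} empirical average evaluated at $\theta$ --- for instance $\frac1n\sum_j\frac{h(X_j,\theta)}{X_j-\overline{\theta}}$, whose summands have modulus at most $\textup{Im}(\theta)^{-1}$ and whose mean vanishes by a further residue computation, and which is consequently $O(n^{-1/2})$ in $L^2$ automatically --- and to multiply it by the factor $Y_n-\theta$, which is at most $\delta$ on $G_n$. This yields $nE[|Z_n-\theta-M_n|^2\mathbf{1}_{G_n}]\le C\delta^2$ uniformly for large $n$. Combining the two events and letting $\delta\to0$ gives $nE[|Z_n-\theta-M_n|^2]\to0$, hence $nE[|Z_n-\theta|^2]\to4\,\textup{Im}(\theta)^2$; the variance bound follows since $\textup{Var}(Z_n)\le E[|Z_n-\theta|^2]$.
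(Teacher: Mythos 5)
Your proof is correct, but it follows a genuinely different route from the paper's. The shared ingredient is the uniform integrability of $\left\{ n\left|Y_n - \theta\right|^2 \right\}_n$, which both you and the paper extract from \eqref{main-var-ass} together with the central limit theorem for $(Y_n)_n$ via Billingsley's converse (convergence in distribution of nonnegative variables plus convergence of means). From there the mechanisms diverge. The paper never introduces your $M_n$: it proves the crude pointwise domination
\[ \sqrt{n}\left|Z_n - \theta\right| \le 11 \sqrt{n} \left|Y_n - \theta\right| + \frac{2\,\textup{Im}(\theta)}{\sqrt{n}} \left| \sum_{j=1}^{n} h(X_j, \theta) \right| \]
using \eqref{after-h-difference}, deduces that $\left\{ n\left|Z_n - \theta\right|^2 \right\}_n$ itself is uniformly integrable, and then identifies the limit of $nE\left[\left|Z_n-\theta\right|^2\right]$ by pairing that uniform integrability with the one-step CLT \eqref{Z-Fisher}, which is quoted from Lehmann. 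You instead bypass \eqref{Z-Fisher} entirely: you linearize $Z_n - \theta = M_n + R_n$ in $L^2$, where $nE\left[\left|M_n\right|^2\right] = 4\,\textup{Im}(\theta)^2$ holds exactly for every $n$, and kill the remainder by exact complex identities plus a good/bad event split driven by the same uniform integrability. Your route costs more algebra (three pieces, peeled centred averages, the $\delta$-splitting) but buys self-containedness --- it does not presuppose the regularity conditions behind the one-step CLT --- and it actually proves a stronger statement, namely the $L^2$ asymptotic linearity $Z_n - \theta = M_n + o_{L^2}(n^{-1/2})$, from which \eqref{Z-Fisher} itself follows. One point you should make explicit: passing from $n\textup{Var}(Y_n)$ in \eqref{main-var-ass} to $\left\|Y_n - \theta\right\|_2 = O(n^{-1/2})$, and the matching of means needed for uniform integrability, both require $E[Y_n] = \theta$ (or a bias of order $o(n^{-1/2})$); this is the unbiasedness of the quasi-arithmetic means established in \cite{Akaoka2021-1}, and the paper's own proof relies on the same fact through its citation of that reference, so it is a dependency to flag rather than a gap.
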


\eqref{main-var-ass} is identical with \cite[Eq.(1.5)]{Akaoka2021-1} and is verified for the case that $f(x) = \log (x+\alpha), \ \alpha \in \overline{\mathbb{H}}$ and $f(x) = \dfrac{x - \alpha}{x - \overline{\alpha}}, \ \alpha \in \mathbb{H}$ in \cite[Theorems 4.2 and 4.4]{Akaoka2021-1}. 
The proof is done by establishing the uniform integrability for $\left\{\sqrt{n} (Z_n - \theta) \right\}_n$. 
Numerical computations for $n E\left[ \left| Z_n - \theta \right|^2 \right]/\textup{Im}(\theta)^2$ and related discussions are given in Appendix. 

\section{Parameter estimation for the circular Cauchy distribution}

In this section, we apply the results in Section 4 to parameter estimations for the circular Cauchy distribution. 
The circular Cauchy distribution, also known as the wrapped Cauchy distribution, appears in the area of directional statistics. 
It is regarded as a distribution on the unit circle. 
It is connected with the Cauchy distribution via the M\"obius transformations.  
Such connection is considered by McCullagh \cite{McCullagh1992, McCullagh1996}. 
Recently, in Kato and McCullagh \cite{Kato2020}, 
an extension to the high-dimensional sphere is investigated in terms of the M\"obius transformations. 
The maximum likelihood estimation of the circular Cauchy distribution is attributed to that of  the Cauchy distribution.  
Due to the connection, we can apply our results in Section 4 to the circular Cauchy distribution in a simple manner. 

Let $\mathbb D := \{z \in \mathbb{C} : |z| < 1\}$. 
The circular Cauchy distribution $P^{\textup{cc}}_w$ with parameter $w \in \mathbb{D}$ is the continuous distribution on $[0, 2\pi)$  
with density function 
\[ p^{\textup{cc}}(x; w) := \frac{1}{2\pi} \frac{1 - |w|^2}{|\exp(ix) - w|^2}, \]
where we have used McCullagh's expression \cite{McCullagh1996}. 
We estimate the parameter $w$.   

In this section we let $\phi_{\alpha} : \mathbb{H} \to \mathbb{D}$ be the function defined by 
\[ \phi_{\alpha}(z) := h(z, \alpha) = \frac{z - \alpha}{z - \overline{\alpha}}. \]
Then $\phi_{\alpha}$ is a bijection and its inverse is given by 
\[ \phi_{\alpha}^{-1}(w) = \frac{\alpha - \overline{\alpha}w}{1 - w}, \ w \in \mathbb{D}. \]

Let $\widetilde X_n, n \ge 1,$ be i.i.d. random variables following the circular Cauchy distribution $P^{\textup{cc}}_{w}$. 
Let $X_n := \phi_{\alpha}^{-1}(\exp(i \widetilde X_n))$. 
They are i.i.d. random variables following the Cauchy distribution with parameter $\phi_{\alpha}^{-1}(w)$. 
Let $Y_n$ be the quasi-arithmetic mean of $X_n$ with a generator $f$ defined by \eqref{def-Yn}. 
Let $Z_n$ be the one-step estimator of $Y_n$ defined by \eqref{def-Zn}.  

Let $W_n := \phi_{\alpha}(Z_n)$. 
Then, by straightforward computations, 
\begin{Lem}
For $w_1, w_2 \in \mathbb{D}$ and $\alpha \in \mathbb{H}$, 
\[ K(P^{\textup{cc}}_{w_1}|P^{\textup{cc}}_{w_2}) = K\left(P_{\phi_{\alpha}^{-1}(w_1)} | P_{\phi_{\alpha}^{-1}(w_2)}\right) = \log\left(1+\frac{|w_1 - w_2|^2}{(1 - |w_1|^2)(1 - |w_2|^2)}\right). \] 
\end{Lem}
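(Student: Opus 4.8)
The plan is to prove the two displayed equalities separately. The first equality, $K(P^{\textup{cc}}_{w_1}|P^{\textup{cc}}_{w_2}) = K(P_{\phi_\alpha^{-1}(w_1)}|P_{\phi_\alpha^{-1}(w_2)})$, is an instance of the invariance of the Kullback--Leibler divergence under an invertible change of the sample variable. The second equality then follows from the Chyzak--Nielsen formula (Proposition \ref{CN}) together with a direct computation of the maximal invariant of $\mathbb{H}\times\mathbb{H}$ evaluated at $(\phi_\alpha^{-1}(w_1),\phi_\alpha^{-1}(w_2))$.

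For the first equality I would use the map $T(\widetilde{x}) := \phi_\alpha^{-1}(\exp(i\widetilde{x}))$, which carries $[0,2\pi)$ bijectively onto $\mathbb{R}$ up to a single point (since $\phi_\alpha^{-1}$ sends the unit circle to the real line, as one checks from the already computed $\textup{Im}(\phi_\alpha^{-1}(w)) = \textup{Im}(\alpha)(1-|w|^2)/|1-w|^2$, which vanishes when $|w|=1$). By the very construction of $X_n$, the map $T$ pushes $P^{\textup{cc}}_w$ forward to $P_{\phi_\alpha^{-1}(w)}$. Writing the change-of-variables relation for densities, $p^{\textup{cc}}(x; w) = p(T(x); \phi_\alpha^{-1}(w))\, |T'(x)|$, I observe that the Jacobian $|T'(x)|$ cancels in the likelihood ratio $p^{\textup{cc}}(x; w_1)/p^{\textup{cc}}(x; w_2)$, while the measure $p^{\textup{cc}}(x; w_1)\,dx$ transforms into $p(y; \phi_\alpha^{-1}(w_1))\,dy$ under $y = T(x)$. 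Substituting into the integral defining $K(P^{\textup{cc}}_{w_1}|P^{\textup{cc}}_{w_2})$ and changing variables then reproduces verbatim the integral defining $K(P_{\phi_\alpha^{-1}(w_1)}|P_{\phi_\alpha^{-1}(w_2)})$.

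For the second equality I set $\theta_k := \phi_\alpha^{-1}(w_k) = (\alpha - \overline{\alpha}w_k)/(1 - w_k)$ for $k = 1,2$ and apply Proposition \ref{CN}, reducing the claim to evaluating $|\theta_1 - \theta_2|^2/(4\,\textup{Im}(\theta_1)\textup{Im}(\theta_2))$. A short computation yields the two identities $\theta_1 - \theta_2 = 2i\,\textup{Im}(\alpha)(w_1 - w_2)/\big((1-w_1)(1-w_2)\big)$ and $\textup{Im}(\theta_k) = \textup{Im}(\alpha)(1 - |w_k|^2)/|1 - w_k|^2$; these are precisely the places where the M\"obius structure of $\phi_\alpha$ makes the expressions collapse. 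Forming the quotient, the factors $\textup{Im}(\alpha)^2$ and $|1-w_1|^2|1-w_2|^2$ cancel, leaving $|w_1 - w_2|^2/\big((1-|w_1|^2)(1-|w_2|^2)\big)$, which is the desired right-hand side.

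I expect the only genuine subtlety to lie in the first equality: one must check that $T$ is a measurable bijection between the two sample spaces (so that no information is lost and the divergence is genuinely preserved) and that the Jacobian truly cancels rather than contributing a spurious additive $\log|T'|$ term, the single exceptional point $w=1$ being negligible since it has measure zero. Conceptually this invariance reflects the fact that $\phi_\alpha$ is an isometry of the hyperbolic metric sending the upper-half-plane invariant to the disc invariant, and one could alternatively derive the second equality from this isometry property; I would nonetheless prefer the explicit algebraic computation above, since it is self-contained and transparent.
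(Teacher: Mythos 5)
Your proposal is correct and is precisely the ``straightforward computation'' the paper alludes to without writing out: the first equality via invariance of the Kullback--Leibler divergence under the measurable bijection $T(\widetilde{x})=\phi_{\alpha}^{-1}(\exp(i\widetilde{x}))$ (the Jacobian cancels in the likelihood ratio, and the exceptional point is null), and the second via Proposition \ref{CN} together with the M\"obius identities $\theta_1-\theta_2 = 2i\,\textup{Im}(\alpha)(w_1-w_2)/\bigl((1-w_1)(1-w_2)\bigr)$ and $\textup{Im}(\theta_k)=\textup{Im}(\alpha)(1-|w_k|^2)/|1-w_k|^2$, both of which check out. Since the paper supplies no explicit proof of this lemma, your argument is a faithful and complete filling-in of the same route.
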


The following are main results in this section, which correspond to Theorems \ref{main} and \ref{main-var} respectively. 

\begin{Thm}\label{Bahadur-cc}
\[ \limsup_{\epsilon \to +0} \limsup_{n \to \infty} \frac{\log P(|W_n - w| > \epsilon)}{n \widetilde b(\epsilon, w)} \le -1,   \]
where we let 
\[ \widetilde b(\epsilon, w) := \inf\left\{K(P^{\textup{cc}}_{w}|P^{\textup{cc}}_{w^{\prime}}) : |w - w^{\prime}| > \epsilon \right\}. \]
\end{Thm}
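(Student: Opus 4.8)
The plan is to transport the estimate to the Cauchy distribution through the M\"obius map $\phi_\alpha$, using the invariance of the Kullback--Leibler divergence recorded in the preceding Lemma together with Theorem \ref{main}. Recall that $W_n = \phi_\alpha(Z_n)$ and $w = \phi_\alpha(\theta)$ with $\theta = \phi_\alpha^{-1}(w) \in \mathbb{H}$. Since $\phi_\alpha$ is a biholomorphism of $\mathbb{H}$ onto $\mathbb{D}$, for small $\epsilon$ the event $\{|W_n - w| > \epsilon\}$ should be comparable to $\{|Z_n - \theta| > \epsilon/|\phi_\alpha'(\theta)|\}$, whose probability is already controlled by Theorem \ref{main}; the whole argument reduces to making the comparison of the Euclidean balls and of the two rate functions quantitatively exact.

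First I would set up a local comparison. Fix $R \in (0, \textup{Im}(\theta))$ and put $C_R := \sup\{|\phi_\alpha(z) - w|/|z - \theta| : 0 < |z - \theta| \le R\}$, which is finite and satisfies $C_R \to |\phi_\alpha'(\theta)|$ as $R \to 0$ by holomorphy of $\phi_\alpha$ at $\theta$. On $\{|Z_n - \theta| \le R\}$ one has $\textup{Im}(Z_n) \ge \textup{Im}(\theta) - R > 0$, so $Z_n \in \mathbb{H}$ and $|W_n - w| \le C_R |Z_n - \theta|$; hence $\{|W_n - w| > \epsilon\} \cap \{|Z_n - \theta| \le R\} \subseteq \{|Z_n - \theta| > \epsilon/C_R\}$. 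Splitting off the complementary event and using $\epsilon/C_R < R$ for small $\epsilon$ gives
\[ P(|W_n - w| > \epsilon) \le 2\, P\!\left(|Z_n - \theta| > \epsilon/C_R\right). \]

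Next I would compare the rate functions. From the Lemma, minimizing over the sphere $|w' - w| = \epsilon$ yields $\widetilde b(\epsilon, w) \sim \epsilon^2/(1-|w|^2)^2$ as $\epsilon \to 0$, while \eqref{b-order} gives $b(\delta, \theta) \sim \delta^2/(4\,\textup{Im}(\theta)^2)$. A direct computation of $\phi_\alpha'(z) = 2i\,\textup{Im}(\alpha)/(z - \overline{\alpha})^2$ furnishes the key identity $1 - |w|^2 = 2\,\textup{Im}(\theta)\,|\phi_\alpha'(\theta)|$, which makes the conformal distortion cancel exactly:
\[ \lim_{\epsilon \to 0} \frac{b(\epsilon/C_R, \theta)}{\widetilde b(\epsilon, w)} = \frac{(1-|w|^2)^2}{4\,\textup{Im}(\theta)^2\, C_R^2} = \frac{|\phi_\alpha'(\theta)|^2}{C_R^2} =: \rho_R, \qquad \rho_R \to 1 \ (R \to 0). \]
This identity is the crux: it is the Euclidean-distance shadow of the $SL(2,\mathbb{R})$-invariance of $K$ and is precisely what forces the transported Bahadur rate to match $\widetilde b$.

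Finally I would combine. Taking logarithms in the probability bound, dividing by $n\,\widetilde b(\epsilon, w)$, and letting $n \to \infty$ (the $\log 2$ term being negligible), one obtains
\[ \limsup_{n \to \infty} \frac{\log P(|W_n - w| > \epsilon)}{n\,\widetilde b(\epsilon, w)} \le \frac{b(\epsilon/C_R, \theta)}{\widetilde b(\epsilon, w)}\; \limsup_{n \to \infty} \frac{\log P(|Z_n - \theta| > \epsilon/C_R)}{n\, b(\epsilon/C_R, \theta)}, \]
where the constant ratio is pulled out of $\limsup_n$ since it is positive and independent of $n$. Letting $\epsilon \to 0$, the first factor tends to $\rho_R > 0$, while by Theorem \ref{main} (reparametrize $\eta = \epsilon/C_R \to 0$) the $\limsup_{\epsilon \to 0}$ of the second factor is $\le -1$; as the second factor is $\le 0$ for small $\epsilon$, the product has $\limsup_{\epsilon \to 0} \le -\rho_R$. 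Since the left-hand side does not depend on $R$, letting $R \to 0$ and using $\rho_R \to 1$ yields the claimed bound $\le -1$. The hard part is entirely analytic: verifying the conformal identity $1 - |w|^2 = 2\,\textup{Im}(\theta)|\phi_\alpha'(\theta)|$ and the matching asymptotics so that $\rho_R \to 1$; all probabilistic content is inherited from Theorem \ref{main}.
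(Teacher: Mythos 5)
Your proposal is correct and follows essentially the same route as the paper: transport the tail event through $\phi_\alpha$ via a local Lipschitz bound on the derivative near $\theta$, compare $\widetilde b(\epsilon,w)$ with $b(\cdot,\theta)$ through \eqref{tilde-b-order}, \eqref{b-order} and the conformal identity $(1-|w|^2)^2 = 4\,\textup{Im}(\theta)^2|\phi_\alpha'(\theta)|^2$, and then invoke Theorem \ref{main}. The only (immaterial) difference is bookkeeping: you fix a ball of radius $R$, split off the event $\{|Z_n-\theta|>R\}$ to get a factor $2$, and pass to the limit $R\to 0$ at the end, whereas the paper takes the supremum of $|\phi_\alpha'|$ over the ball of radius $\epsilon$ (via the complex mean-value theorem) so that the rate ratio tends to $1$ directly as $\epsilon \to 0$.
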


\begin{Thm}\label{Var-CR-cc}
\[ \limsup_{n \to \infty} n \textup{Var}(W_n) \le  \lim_{n \to \infty} n E\left[|W_n - w|^2\right] = \left(1 - |w|^2 \right)^2. \]
\end{Thm}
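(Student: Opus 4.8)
The plan is to reduce the whole theorem to the variance asymptotics for $Z_n$ already established in Theorem \ref{main-var}, via an exact algebraic identity for $\phi_\alpha$. A direct computation of $\phi_\alpha(Z_n) - \phi_\alpha(\theta)$ with $\theta = \phi_\alpha^{-1}(w)$ gives the clean identity
\[ W_n - w = \frac{2i\,\textup{Im}(\alpha)\,(Z_n - \theta)}{(Z_n - \overline{\alpha})(\theta - \overline{\alpha})}, \]
so that
\[ n\left|W_n - w\right|^2 = \frac{4\,\textup{Im}(\alpha)^2}{|\theta - \overline{\alpha}|^2}\cdot\frac{n\left|Z_n - \theta\right|^2}{|Z_n - \overline{\alpha}|^2}. \]
Hence everything comes down to showing
\[ E\left[\frac{n\left|Z_n - \theta\right|^2}{|Z_n - \overline{\alpha}|^2}\right] \longrightarrow \frac{4\,\textup{Im}(\theta)^2}{|\theta - \overline{\alpha}|^2}, \]
after which I substitute and simplify. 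For the constant I would record $\phi_\alpha^{\prime}(\theta) = (\alpha - \overline{\alpha})/(\theta-\overline{\alpha})^2$ and the elementary identity $1 - |w|^2 = 4\,\textup{Im}(\theta)\textup{Im}(\alpha)/|\theta-\overline{\alpha}|^2$, obtained from $|\theta-\overline{\alpha}|^2 - |\theta-\alpha|^2 = 4\,\textup{Im}(\theta)\textup{Im}(\alpha)$; together these give $|\phi_\alpha^{\prime}(\theta)|^2\cdot 4\,\textup{Im}(\theta)^2 = (1-|w|^2)^2$, which is exactly the claimed limit.

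To pass to the limit I would combine three ingredients provided earlier: the convergence $nE[|Z_n-\theta|^2]\to 4\,\textup{Im}(\theta)^2$ together with the uniform integrability of $\{n|Z_n-\theta|^2\}_n$ from the proof of Theorem \ref{main-var}; the consistency $Z_n\to\theta$, so that $|Z_n-\overline{\alpha}|^{-2}\to|\theta-\overline{\alpha}|^{-2}$; and the exponential tail bound of Theorem \ref{LDP-chernoff} for the initial estimator $Y_n$, along with the exponential tail of $Z_n$ already used for Theorem \ref{main}. Writing $g(z) = |z-\overline{\alpha}|^{-2}$, I split $n|Z_n-\theta|^2 g(Z_n) = n|Z_n-\theta|^2 g(\theta) + n|Z_n-\theta|^2(g(Z_n)-g(\theta))$; the first term converges to $4\,\textup{Im}(\theta)^2 g(\theta)$, and on the event $\{|Z_n-\theta|\le\delta\}$ the second is bounded by the modulus of continuity of $g$ at $\theta$ times $nE[|Z_n-\theta|^2]$, hence negligible as $\delta\to 0$.

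The hard part will be the contribution of the large-deviation region $\{|Z_n-\theta|>\delta\}$, because $g(Z_n) = |Z_n-\overline{\alpha}|^{-2}$ is unbounded: it blows up precisely when $Z_n$ approaches the pole $\overline{\alpha}$ of $\phi_\alpha$, and from the explicit form $Z_n = Y_n + (2\,\textup{Im}(Y_n)i/n)\sum_j h(X_j,Y_n)$ one checks that $\textup{Im}(Z_n)$ can become negative, so $Z_n$ need not stay in $\overline{\mathbb H}$. I would handle this by a further split at a small radius $\rho<|\theta-\overline{\alpha}|$ around $\overline{\alpha}$: on $\{|Z_n-\overline{\alpha}|\ge\rho\}$ one has $g(Z_n)\le\rho^{-2}$, so the uniform integrability of $\{n|Z_n-\theta|^2\}_n$ together with $P(|Z_n-\theta|>\delta)\to 0$ kills this piece; on $\{|Z_n-\overline{\alpha}|<\rho\}$ I would first show that this forces $\tfrac1n\sum_j\textup{Re}(h(X_j,Y_n))$ to be bounded away from $0$ (close to $-1$), which is a large deviation of exponentially small probability, and then control $E[n|Z_n-\theta|^2 g(Z_n)\mathbf{1}_{\{|Z_n-\overline{\alpha}|<\rho\}}]$ by a dyadic decomposition of the annuli $\{2^{-k-1}\rho\le|Z_n-\overline{\alpha}|<2^{-k}\rho\}$, balancing the growth $4^{k}$ of $g$ against the decay in $k$ and in $n$ of the probability of landing that close to $\overline{\alpha}$. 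Establishing that this pole contribution vanishes is the crux; once it does, the displayed limit follows, the theorem's equality holds, and the variance bound is immediate from $\textup{Var}(W_n)=E[|W_n-E[W_n]|^2]\le E[|W_n-w|^2]$, giving $\limsup_{n} n\,\textup{Var}(W_n)\le(1-|w|^2)^2$.
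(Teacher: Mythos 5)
Your algebraic reduction is correct, and it is genuinely different from the paper's route: the paper never writes down the exact identity $\phi_{\alpha}(z)-\phi_{\alpha}(\theta) = 2i\,\textup{Im}(\alpha)(z-\theta)/\bigl((z-\overline{\alpha})(\theta-\overline{\alpha})\bigr)$. Instead it linearizes, writing $W_n - w = \phi_{\alpha}^{\prime}(\theta)(Z_n-\theta) + R_n$, and kills $nE[|R_n|^2]$ by the complex mean-value theorem \cite[Theorem 2.2]{Evard1992} together with the boundedness of $\phi_{\alpha}^{\prime}$ and $\phi_{\alpha}^{\prime\prime}$ on $\mathbb{H}$, which yields the uniform bound
\[ \left| \frac{\phi_{\alpha}(Z_n) - \phi_{\alpha}(\theta)}{Z_n - \theta} -  \phi_{\alpha}^{\prime}(\theta)  \right| \le 4\, \textup{Im}(\alpha)\min\left\{1, |Z_n - \theta| \right\}; \]
after that, the same split on $\{|Z_n-\theta|\le\epsilon\}$ and the uniform integrability of $\{n|Z_n-\theta|^2\}_n$ that you invoke finish the proof in a few lines. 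Because that bound is uniform in $Z_n$, the pole of $\phi_{\alpha}$ at $\overline{\alpha}$ never appears in the paper's argument. Your exact identity, by contrast, forces you to confront the factor $|Z_n-\overline{\alpha}|^{-2}$ head on --- and your observation that $Z_n$ need not stay in $\overline{\mathbb H}$ is correct, so this is a real issue, not a phantom one.

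The problem is that the step you yourself call the crux is only a sketch, and it cannot be completed with the tools you list. In the dyadic decomposition the $k$-th annulus contributes roughly $4^{k}\rho^{-2}\,P\bigl(|Z_n-\overline{\alpha}|<2^{-k}\rho\bigr)$, so for each \emph{fixed} $n$ you need a small-ball (anti-concentration) estimate $P(|Z_n-\overline{\alpha}|<r)=o(r^2)$ as $r\to+0$, say $O(r^{2+\delta})$; without it the sum over $k$ diverges and you cannot even assert $E[|W_n-w|^2]<+\infty$. Your large-deviation ingredient --- that $|Z_n-\overline{\alpha}|<\rho$ forces $\frac{1}{n}\sum_{j}\textup{Re}\,h(X_j,Y_n)<-\frac{1}{2}$, an event of probability $O(e^{-cn})$ --- decays in $n$ but is uniform in the radius, so it does nothing for the convergence of the $k$-sum; and neither Theorem \ref{LDP-chernoff}, nor Theorem \ref{main}, nor the uniform integrability extracted in the proof of Theorem \ref{main-var} gives any control of the local density of the law of $Z_n$ at the point $\overline{\alpha}$. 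Worse, such control may simply fail: for the generator $f(x)=1/(x+i)$, the configuration with half the sample at $M$ and half at $-M$ gives exactly $Y_n=M^2 i$ and $Z_n=M^2\frac{3-M^2}{1+M^2}\,i$, which sweeps through the whole negative imaginary axis as $M$ ranges over $(\sqrt{3},\infty)$; hence for purely imaginary $\alpha$ there are sample configurations with $Z_n=\overline{\alpha}$ exactly, and perturbing them suggests the law of $Z_n$ has positive density near $\overline{\alpha}$, in which case $E[|Z_n-\overline{\alpha}|^{-2}]$ --- and with it $E[|W_n-w|^2]$ --- diverges logarithmically for every $n$ (a bounded positive planar density makes $\int |z-\overline{\alpha}|^{-2}$ non-integrable). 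So the pole contribution is not a remainder one can expect to estimate away by balancing exponents; closing your argument would require a genuinely new anti-concentration input that neither your proposal nor anything in the paper provides.
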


The estimator $W_n$ has a simple closed-form, contrary to the MLE for the circular Cauchy distribution. 
These assertions are somewhat easy consequences of Theorems \ref{main} and \ref{main-var}.  

\begin{Rem}
\cite[Section 4.3]{Kato2020} deals with a one-step estimator different from ours, which is applicable to the higher dimensional case. 
\end{Rem}

\section{Proofs}

\subsection{Proofs of assertions in Section 2}

\begin{proof}[Proof of Theorem \ref{LDP-chernoff}]
Since $f^{-1}$ is injective and holomorphic on an open neighborhood of $f(\theta)$, we see that 
there exists  a constant $c_3  = c_3 (\mu, \sigma) > 0$ such that for every $\epsilon \in (0, \sigma)$  
and every $n \ge 2$, 
\[P\left(\left| f^{-1} \left( \frac{1}{n} \sum_{j=1}^{n} f(X_j)  \right) - \theta \right| > \epsilon \right)  \le P\left( \left| \frac{1}{n} \sum_{i=1}^{n} f(X_i) - f(\theta) \right| > c_3 \epsilon \right)\]
\[ \le P\left( \left| \frac{1}{n} \sum_{i=1}^{n} \textup{Re}(f(X_i)) - \textup{Re}(f(\theta)) \right| > \frac{c_3 \epsilon}{2} \right) + P\left( \left| \frac{1}{n} \sum_{i=1}^{n} \textup{Im}(f(X_i)) - \textup{Im}(f(\theta)) \right| > \frac{c_3 \epsilon}{2} \right). \]

By \eqref{residue} and \eqref{exp-moment}, 
we can apply the Cram\'er-Chernoff method and have that 
there exist positive constants $c_4$ and $c_5$ such that 
for every $\epsilon \in (0, \sigma)$  
and every $n \ge 2$, 
\[ P\left( \left| \frac{1}{n} \sum_{i=1}^{n} \textup{Re}(f(X_i)) - \textup{Re}(f(\theta)) \right| > \frac{c_3 \epsilon}{2} \right) \le c_4 \exp(-c_5 n \epsilon^2), \] 
and 
\[ P\left( \left| \frac{1}{n} \sum_{i=1}^{n} \textup{Im}(f(X_i)) - \textup{Im}(f(\theta)) \right| > \frac{c_3 \epsilon}{2} \right) \le c_4 \exp(- c_5 n \epsilon^2).  \]

Now we have the assertion. 
\end{proof}

\subsection{Proofs of assertions in Section 3}

Let $H(t) := E\left[h(X, t)\right]$ for $X$ following $C(\theta)$.  
Then, by the Cauchy integral formula, 
\[ H(t) = \frac{\theta - t}{\theta - \overline{t}} = h(\theta, t).  \]
It is continuous on $\overline{H}$. 
$H(t) = 0$ if and only if $t = \theta$. 

\begin{Lem}\label{LDP-cpt}
There exists a compact set $K$ of $\mathbb H$ and a constant $\alpha \in (0,1)$ such that $\theta \in K$ and 
\[ P(\hat{\theta}_n \notin K) = O(\alpha^n).   \]
\end{Lem}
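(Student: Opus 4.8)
The plan is to work with the complex score relation rather than the likelihood itself, exploiting that its summands are bounded. Write $H_n(t) := \frac{1}{n}\sum_{j=1}^{n} h(X_j,t)$, so that the defining property of the MLE is simply $H_n(\hat\theta_n) = 0$, while its mean is the function $H(t) = h(\theta,t) = \frac{\theta-t}{\theta-\overline{t}}$ recorded just above. Since $t\mapsto h(\theta,t)$ is a M\"obius transformation, $H$ is a homeomorphism of $\mathbb H$ onto $\mathbb D$ with $H(\theta)=0$ and $|H(t)|\to 1$ as $t$ approaches $\partial\mathbb H$ (the real axis or infinity). Hence, fixing any $\eta\in(0,1)$, the sublevel set
\[ K := \{ t\in\mathbb H : |H(t)|\le 1-\eta \} = H^{-1}(\{w : |w|\le 1-\eta\}) \]
is a compact subset of $\mathbb H$ containing $\theta$, and this is the set in the statement. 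The reduction is then immediate: if $\hat\theta_n\notin K$ then $|H(\hat\theta_n)| > 1-\eta$, whereas $H_n(\hat\theta_n)=0$, so
\[ \sup_{t\in\mathbb H} \bigl| H_n(t) - H(t)\bigr| \ge \bigl| H_n(\hat\theta_n) - H(\hat\theta_n)\bigr| = |H(\hat\theta_n)| > 1-\eta. \]
Thus $\{\hat\theta_n\notin K\}\subseteq\{\sup_{t\in\mathbb H}|H_n(t)-H(t)| > 1-\eta\}$, and everything reduces to a single uniform-in-$t$ exponential deviation estimate for the empirical average of the bounded fields $h(X_j,\cdot)$.

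The point that makes this tractable is that, although $X_j$ is heavy-tailed, $|h(X_j,t)| = 1$ for every real $X_j$ and every $t\in\mathbb H$, so no moment issues arise; moreover each map $x\mapsto h(x,t)$ has total variation on $\mathbb R$ bounded uniformly in $t$, since $\partial_x h(x,t) = \frac{2i\,\textup{Im}(t)}{(x-\overline{t})^2}$ gives $\int_{\mathbb R}|\partial_x h(x,t)|\,dx = 2\pi$. Consequently the real and imaginary parts of $\{h(\cdot,t):t\in\mathbb H\}$ form classes of uniformly bounded functions of uniformly bounded variation, which are universal Glivenko--Cantelli (indeed Donsker) classes; in particular $E\bigl[\sup_{t}|H_n(t)-H(t)|\bigr]\to 0$. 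I would then invoke the bounded differences inequality: replacing one sample $X_j$ changes $\sup_t|H_n(t)-H(t)|$ by at most $2/n$, so McDiarmid's inequality yields, for all large $n$,
\[ P\Bigl(\sup_{t\in\mathbb H}|H_n(t)-H(t)| > 1-\eta\Bigr) \le \exp(-c\, n) \]
with $c>0$ depending on $\eta$. Together with the reduction this gives $P(\hat\theta_n\notin K) = O(\alpha^n)$ for $\alpha = e^{-c}\in(0,1)$, finitely many small $n$ being absorbed into the constant.

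The main obstacle is precisely the uniformity over the noncompact parameter region in the second step. A naive argument that discretizes the parameter $t$ and controls oscillations through equicontinuity fails here, because the generators $h(\cdot,t)$ are far from equicontinuous in $t$ near the real axis: as $t$ approaches a data point $X_j$, the value $h(X_j,t)$ sweeps around the entire unit circle. The resolution is to cover the \emph{function} class in $L^2$ using its uniformly bounded total variation, instead of discretizing the parameter space; this is exactly what upgrades the pointwise Cram\'er--Chernoff bound (available for each fixed $t$ because the summands are bounded) to the uniform exponential estimate. The remaining ingredients --- the conformal description of $H$, the boundary behavior $|H|\to 1$, and the variation computation --- are routine.
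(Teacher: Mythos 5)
Your proof is correct, but it takes a genuinely different route from the paper's. The first step is essentially shared: your compact set $K=\{t:|H(t)|\le 1-\eta\}$ is the same kind of hyperbolic-disc sublevel set as the paper's $K_m = F_\theta^{-1}\left(\{|z|^2\le \tfrac{m}{m+1}\}\right)$, and both proofs start from the likelihood equation $\frac{1}{n}\sum_j h(X_j,\hat\theta_n)=0$. The divergence is in how the non-compactness of $\mathbb H$ is tamed. The paper follows Arcones' blocking scheme: it forms the block suprema $R_m^{(k)}$ of the normalized sums of $h(x_j,t)/H(t)-1$ over $t\notin K_m$, exploits that this ratio tends to $0$ as $t$ approaches the real axis or infinity (for sample points distinct from the limit), picks a block size $k_0 > 2+1/\inf_{t\notin K_1}|H(t)|$ so that the a.s. limit of $R^{(k_0)}$ is small, and closes with a Chernoff bound across independent blocks plus bounded convergence in $m$. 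You instead bound $P(\hat\theta_n\notin K)$ by a uniform deviation probability for the empirical process $\{h(\cdot,t):t\in\mathbb H\}$ over the \emph{whole} half-plane, and control it by function-class arguments: each of $\textup{Re}\,h(\cdot,t)$, $\textup{Im}\,h(\cdot,t)$ is bounded by $1$ with total variation at most $2\pi$ uniformly in $t$, so the class is universal Glivenko--Cantelli (bracketing for bounded monotone/BV functions), giving $E\bigl[\sup_t|H_n(t)-H(t)|\bigr]\to 0$, and McDiarmid's inequality upgrades this to an exponential tail since replacing one observation moves the supremum by at most $2/n$. Both resolutions attack the same obstruction (the maps $t\mapsto h(x,t)$ are not equicontinuous near the real axis, which is exactly why condition (v) of Arcones' Theorem 3.8 fails, as the paper's remark notes): the paper neutralizes the boundary by the ratio trick, you neutralize it by covering the function class rather than the parameter space. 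Your argument is shorter and yields an explicit eventual exponent (of order $(1-\eta)^2/8$), at the price of invoking nontrivial empirical-process theory; the paper's argument is longer but self-contained, needing only the Cram\'er--Chernoff bound and bounded convergence, and stays structurally parallel to the Arcones machinery used in the subsequent lemmas.
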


\begin{proof}
Let $F_{\theta}(z) := h(z, \theta)$ 
and 
$$K_m := F_{\theta}^{-1}\left(\left\{z \in \mathbb{C}  : |z|^2 \le \frac{m}{m+1} \right\}\right), \ m \ge 1.$$
We see that each $K_m$ is a compact subset of $\mathbb H$ and $\theta \in K_1$. 
By the definition of $H$, we see that 
\[ \inf_{t \notin K_1} |H(t)| > 0.  \]
Let 
\[ R_m^{(k)}(x_1, \dots, x_k) := \sup_{t \notin K_m} \frac{1}{k} \left| \sum_{j=1}^{k} \left(\frac{h(x_j, t)}{H(t)} - 1\right) \right|, \ \ x_1, \dots, x_k \in \mathbb{R}.  \]
This is decreasing with respect to $m$. 
Let 
\[ R^{(k)}(x_1, \dots, x_k) := \lim_{m \to \infty} R_m^{(k)}(x_1, \dots, x_k). \]
Since $|h(x_j, t)| = 1$,
we see that 
\[ |R_m^{(k)}(x_1, \dots, x_k)| \le 1 + \frac{1}{\inf_{t \notin K_1} |H(t)|}. \]
Hence, 
\[ |R^{(k)}(x_1, \dots, x_k)| \le 1 + \frac{1}{\inf_{t \notin K_1} |H(t)|}. \]

Since 
\[ \frac{h(x_j, t)}{H(t)} - 1 = \frac{2 \textup{Im}(t) (x_j -\theta)}{(x_j - \overline{t})(\theta - t)}, \]
we see that if $a \in \mathbb{R} \setminus \{x_j\}$, then, 
\[ \lim_{t \to a} \frac{h(x_j, t)}{H(t)} - 1 = 0.  \]
and furthermore, 
\[ \lim_{|t| \to \infty; t \in \mathbb H} \frac{h(x_j, t)}{H(t)} - 1 = 0. \]

Therefore, 
\[ \lim_{t \to a} \sum_{j=1}^{k} \left(\frac{h(x_j, t)}{H(t)} - 1\right) = 0, \ a \in \mathbb{R} \setminus\{x_1, \dots, x_k\}.  \]
and, 
\[ \lim_{t \to \infty} \sum_{j=1}^{k} \left(\frac{h(x_j, t)}{H(t)} - 1\right) = 0. \]
 Furthermore, if $x_1, \dots, x_k$ are distinctive, then, 
\[ \limsup_{t \to a} \left|  \sum_{j=1}^{k} \left(\frac{h(x_j, t)}{H(t)} - 1\right) \right|  \le 1 + \frac{1}{\inf_{t \notin K_1} |H(t)|}, \ a \in \{x_1, \dots, x_k\}.  \]
Therefore we see that if $x_1, \dots, x_k$ are distinctive, then, 
\[ |R^{(k)}(x_1, \dots, x_k)| \le \frac{1}{k} \left(1 + \frac{1}{\inf_{t \notin K_1} |H(t)|}\right).  \]

Let $k_0$ be an integer such that 
\[ k_0 > 2 + \frac{1}{\inf_{t \notin K_1} |H(t)|}. \]

Now by following the argument in the proof of \cite[Theorem 3.8]{Arcones2006}, we see that for every $\epsilon \in (0,1)$, 
\[ \left\{ \sum_{j=1}^{n} R_m^{(k_0)}\left(X_{(j-1)k_0 +1}, \cdots, X_{j k_0} \right)  \le n(1-\epsilon) \right\} \subset \left\{ \hat{\theta}_{k_0 n} \in K_m \right\}. \]



By this and the fact that $\{(X_{(j-1)k_0 +1}, \cdots, X_{j k_0})\}_j$ are independent, 
\[ P( \hat{\theta}_{k_0 n} \notin K_m) \le P\left( \sum_{j=1}^{n} R_m^{(k_0)}\left(X_{(j-1)k_0 +1}, \cdots, X_{j k_0} \right) > n(1-\epsilon)  \right)\]
\[ \le \exp(-n(1-\epsilon)) E\left[ \exp\left( \sum_{j=1}^{n} R_m^{(k_0)}\left(X_{(j-1)k_0 +1}, \cdots, X_{j k_0} \right) \right)\right] \]
\[ = \left(\exp(- (1-\epsilon)) E\left[\exp\left(R_m^{(k_0)}\left(X_{1}, \cdots, X_{k_0} \right)\right)\right]  \right)^n.  \]

By the bounded convergence theorem, 
\[ \lim_{m \to \infty} E\left[\exp\left(R_m^{(k_0)}\left(X_{1}, \cdots, X_{k_0} \right)\right)\right] = E\left[\exp\left(R^{(k_0)}\left(X_{1}, \cdots, X_{k_0} \right)\right)\right]. \]

Since $X_{1}, \cdots, X_{k_0} $ are distinctive almost surely, 
\[ E\left[\exp\left(R^{(k_0)}\left(X_{1}, \cdots, X_{k_0} \right)\right)\right] \le \exp\left( \frac{1}{k_0} \left(1 + \frac{1}{\inf_{t \notin K_1} |H(t)|}\right) \right). \]

By recalling the definition of $k_0$, if we take sufficiently small $\epsilon$ and sufficiently large $m$, 
\[ \exp(- (1-\epsilon)) E\left[\exp\left(R_m^{(k_0)}\left(X_{1}, \cdots, X_{k_0} \right)\right)\right] < 1.\] 

Now we have the assertion for the case that $n$ is a multiple of $k_0$. 
Even if  $n$ is not a multiple of $k_0$, 
with remark the fact that 
$\dfrac{h(x, t)}{H(t)}$ is uniformly bounded, 
we see that for a positive constant $C$, 
\[ P( \hat{\theta}_{n} \notin K_m) \le C\left(\exp(- (1-\epsilon)) E\left[\exp\left(R_m^{(k_0)}\left(X_{1}, \cdots, X_{k_0} \right)\right)\right]  \right)^{\lfloor n/k_0\rfloor},  \]
where $\lfloor n/k_0\rfloor$ denotes the integer part of $n/k_0$. 
\end{proof}

\begin{Rem}
Although we have used the techniques in the proof of  \cite[Theorem 3.8]{Arcones2006}, 
Condition (v) in \cite[Theorem 3.8]{Arcones2006} does not hold in our case. 
Our main idea is introducing $R_m^{(k)}\left(X_{1}, \cdots, X_{k} \right)$ for general $k$. 
\end{Rem}

Let $h_1 (x,t) := \textup{Re}(h(x,t))$ and $h_2 (x,t) := \textup{Im}(h(x,t))$. 
As a function of $t \in K$, 
$ \frac{1}{n} \sum_{j=1}^{n} h_i (X_j, t)$ is a $\ell_{\infty}(K)$-valued random variable. 

Let 
\[ \mathcal{L} := \left\{ f : \mathbb{R} \to \mathbb{R}\  \middle| \ \exists \lambda < +\infty \textup{ such that } E\left[\exp(\lambda |f(X_1)|)\right] < +\infty  \right\}. \]
Let $\mathcal{L}^{*}$ be the dual space of $\mathcal{L}$. 
Let 
\[ J(l) := \sup_{f \in \mathcal{L}} \left(l(f) - \log E[\exp(f(X_1))] \right), \ \ l \in \mathcal{L}^{*}.  \]
Let $\ell_{\infty}(K)$ be the set of bounded continuous functions on $K$. 
Let $P_{*}$ and $P^{*}$ be the inner and outer measures of a probability measure $P$, respectively.  

\begin{Lem}\label{LDP-h}
Assume that $i = 1$ or $2$. 
For each non-empty compact subset $K$ of $\mathbb{H}$, 
$\left\{ \frac{1}{n} \sum_{j=1}^{n} h_i (X_j, t) \right\}_n$ follows the large deviation principle in $\ell_{\infty}(K)$. 
Specifically, there exists a good rate function $I$ on $\ell_{\infty}(K)$ such that (i) $\{I \le c\}$ is compact in $\ell_{\infty}(K)$ for every $c > 0$, \\
(ii) for every open subset $U$ of  $\ell_{\infty}(K)$,  
\[ \liminf_{n \to \infty} \frac{1}{n} \log P_{*}\left(\frac{1}{n} \sum_{j=1}^{n} h_i (X_j, t)  \in U\right)  \ge -\inf_{z \in U} I(z), \]
and, (iii)  for every closed subset $F$ of  $\ell_{\infty}(K)$,  
\[ \liminf_{n \to \infty} \frac{1}{n} \log P^{*}\left(\frac{1}{n} \sum_{j=1}^{n} h_i (X_j, t)  \in F \right)  \le -\inf_{z \in F} I(z). \]
Furthermore, 
\[ I_i(z) = \inf\left\{J(l) : l \in \mathcal{L}^{*}, \  l\left(h_i(\cdot, t)\right) = z(t) \textup{ for every } t \in K \right\}, \ z \in \ell_{\infty}(K). \]
\end{Lem}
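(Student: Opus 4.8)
The plan is to view the random field $t \mapsto \frac1n\sum_{j=1}^n h_i(X_j,t)$ as the empirical mean of the i.i.d.\ sequence $\xi_j := h_i(X_j,\cdot)$ of random elements of a function space, and to deduce the lemma from an abstract Cram\'er theorem together with the contraction principle. The crucial preliminary observation is that, for real $x$ and $t\in\mathbb H$, one has $|x-t|=|x-\overline t|$, hence $|h(x,t)|=1$ and $|h_i(x,t)|\le 1$. Thus each $\xi_j$ is \emph{uniformly bounded} by $1$, and since $h$ is continuous in $t$ on the compact set $K$, the sample paths $t\mapsto h_i(X_j,t)$ are continuous; consequently $\xi_j$ and the empirical mean $\overline\xi_n:=\frac1n\sum_j\xi_j$ take values in the separable Banach space $C(K)\subseteq\ell_\infty(K)$. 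The inner and outer measures in the statement serve only to bypass the measurability subtleties of the non-separable ambient space $\ell_\infty(K)$; all the probabilistic content lives in $C(K)$.

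First I would establish the finite-dimensional large deviation principles: for any $t_1,\dots,t_k\in K$, the $\mathbb R^k$-valued averages $\bigl(\overline\xi_n(t_1),\dots,\overline\xi_n(t_k)\bigr)$ obey Cram\'er's theorem in $\mathbb R^k$, the hypotheses being trivially met because the summands are bounded. The Dawson--G\"artner projective limit theorem then yields an LDP for $\overline\xi_n$ in the topology of pointwise convergence on $K$. To strengthen this to the uniform (sup-norm) topology of $C(K)$, and to obtain a good rate function with compact level sets (claim (i)), I would establish \emph{exponential tightness}. Since $\|\xi_1\|_\infty\le 1$ gives $E[\exp(\lambda\|\xi_1\|_\infty)]\le e^{\lambda}<\infty$ for every $\lambda$, exponential tightness reduces to an exponential equicontinuity estimate. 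The resulting LDP supplies the lower and upper bounds (ii) and (iii), the standard $\limsup$ upper bound being stronger than, and so implying, the stated $\liminf$ form. See Dembo--Zeitouni \cite{Dembo2010} and Arcones \cite{Arcones2006} for this machinery.

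It remains to identify the rate function in the claimed form, and here I would use the contraction principle through the map $\pi:\mathcal L^{*}\to\ell_\infty(K)$ defined by $\pi(l)(t):=l\bigl(h_i(\cdot,t)\bigr)$. The empirical functional $L_n\in\mathcal L^{*}$, acting by $L_n(f)=\frac1n\sum_{j=1}^n f(X_j)$, satisfies Cram\'er's theorem in the dual space $\mathcal L^{*}$ with rate function $J$; this is exactly the role played by the exponential-moment space $\mathcal L$ in the definition of $J$. Since $\pi(L_n)(t)=L_n(h_i(\cdot,t))=\overline\xi_n(t)$, the contraction principle transfers the LDP for $L_n$ to $\overline\xi_n$ and expresses its rate function as
\[
I_i(z)=\inf\bigl\{J(l):l\in\mathcal L^{*},\ \pi(l)=z\bigr\}=\inf\bigl\{J(l):l\in\mathcal L^{*},\ l(h_i(\cdot,t))=z(t)\ \text{for every }t\in K\bigr\},
\]
which is the asserted formula.

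The main obstacle is the exponential equicontinuity estimate underlying exponential tightness (and, relatedly, the continuity of $\pi$ needed to legitimise the contraction principle). Concretely, one must show that
\[
\lim_{\delta\to 0}\ \limsup_{n\to\infty}\ \frac1n\log P^{*}\Bigl(\sup_{s,t\in K,\ |s-t|\le\delta}\bigl|\overline\xi_n(s)-\overline\xi_n(t)\bigr|>\eta\Bigr)=-\infty
\]
for every $\eta>0$. I expect to derive this from the uniform modulus of continuity of $t\mapsto h_i(x,t)$ on the compact set $K$ together with the boundedness $|h_i|\le 1$, via a finite-net (chaining) argument that reduces the supremum over $K$ to finitely many points and then applies the finite-dimensional exponential bounds. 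The equicontinuity of the family $\{h_i(\cdot,t):t\in K\}$ as a subset of $\mathcal L$ is simultaneously what makes $\pi$ continuous and hence the contraction principle applicable.
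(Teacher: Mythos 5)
Your proposal is essentially correct, and its two load-bearing observations are exactly the ones the paper's proof rests on: the uniform bound $|h_i(x,t)|\le|h(x,t)|=1$ for real $x$, and a Lipschitz estimate in $t$, namely $|h(x,s)-h(x,t)|\le 2|s-t|/\textup{Im}(t)$, which the paper obtains from the identity $h(x,s)-h(x,t)=\frac{x-s}{(x-\overline{s})(x-\overline{t})}(\overline{s}-\overline{t})+\frac{t-s}{x-\overline{t}}$ together with $|x-s|=|x-\overline{s}|$. The difference is what happens next: the paper simply observes that these two facts are the hypotheses (conditions (i)--(iii)) of the general empirical-process LDP of \cite[Theorem 2.5]{Arcones2006}, whose conclusion is verbatim the statement of the lemma, rate-function formula included; you instead propose to re-derive that general theorem from scratch (finite-dimensional Cram\'er plus Dawson--G\"artner, exponential tightness, and an abstract Cram\'er--Sanov theorem in $\mathcal{L}^{*}$ plus contraction to identify the rate). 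What the paper's route buys is a two-line proof; what yours buys is self-containedness, at the cost of having to actually carry out the dual-space LDP and the rate identification, which is precisely the content of the cited black box rather than something available off the shelf in the exact form you need.

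Two remarks on your execution. First, the step you single out as the main obstacle is in fact trivial here and needs no chaining: the Lipschitz bound above is uniform in $x$ (deterministic), so every sample path, and hence every empirical average, is Lipschitz on $K$ with constant $2/\inf_{t\in K}\textup{Im}(t)$ and bounded by $1$; by Arzel\`a--Ascoli the whole sequence lives in a single fixed compact subset of $C(K)$, so exponential tightness in the uniform topology is automatic and your supremum over $|s-t|\le\delta$ is deterministically $O(\delta)$. Second, your contraction step has a topological wrinkle: the map $\pi(l)(t)=l\left(h_i(\cdot,t)\right)$ is continuous from the weak$^{*}$ topology on $\mathcal{L}^{*}$ (the topology in which any abstract Cram\'er theorem for $L_n$ will hold) into the topology of pointwise convergence on $K$, but not into the sup norm, since weak$^{*}$-convergent nets need not converge uniformly over the family $\{h_i(\cdot,t):t\in K\}$. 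The repair is standard given what you already have: apply the contraction principle into the pointwise topology to identify the rate function there, then use the (free) exponential tightness to upgrade the LDP, with the same good rate function, to $\ell_{\infty}(K)$. With these adjustments your argument goes through, but it amounts to reproving \cite[Theorem 2.5]{Arcones2006} rather than invoking it, which is all the paper does.
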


\begin{proof}
It suffices to check the conditions in \cite[Theorem 2.5]{Arcones2006}.  
We remark that for every $x \in \mathbb{R}$ and every $t \in \mathbb{H}$, 
\[ |h_i (x,t)| \le |h(x,t)| = 1. \]
This implies \cite[Theorem 2.5 conditions (i) and (ii)]{Arcones2006}. 

Let $s, t \in \mathbb{H}$ and $|s - t| \le |t|/4$. 
Since 
\begin{equation}\label{h-difference} 
h(x,s) - h(x,t) = \frac{x-s}{(x-\overline{s})(x - \overline{t})} (\overline{s} - \overline{t}) + \frac{t-s}{x - \overline{t}}, \ x \in \mathbb{R},  
\end{equation} 
we see that 
\begin{equation}\label{after-h-difference}   
\left|   h(x,s) - h(x,t)  \right| \le 2 \frac{|s-t|}{\textup{Im}(t)}.  
\end{equation} 
Hence \cite[Theorem 2.5 condition (iii)]{Arcones2006} holds.   
\end{proof}

\begin{Prop}\label{LD-consistent}
For every $\epsilon > 0$, 
there exists a constant $\beta \in (0,1)$ such that 
\[ P\left( \left|\hat{\theta}_n - \theta\right| \ge \epsilon\right) = O(\beta^n).  \]
\end{Prop}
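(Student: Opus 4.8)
The plan is to combine the localization of Lemma \ref{LDP-cpt} with the uniform large deviation estimate of Lemma \ref{LDP-h}, exploiting that $\hat{\theta}_n$ is the zero of the empirical version of $H$. Fix the compact set $K \ni \theta$ and the constant $\alpha \in (0,1)$ furnished by Lemma \ref{LDP-cpt}, so that $P(\hat{\theta}_n \notin K) = O(\alpha^n)$. It then suffices to bound $P(\hat{\theta}_n \in K_{\epsilon})$ exponentially, where $K_{\epsilon} := \{t \in K : |t - \theta| \ge \epsilon\}$ is a compact subset of $\mathbb{H}$ not containing $\theta$ (if $K_{\epsilon}$ is empty the bound is trivial). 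Since $H(t) = h(\theta, t)$ is continuous on $K$ and vanishes only at $t = \theta$, we have $\delta := \inf_{t \in K_{\epsilon}} |H(t)| > 0$.

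First I would convert the event $\{\hat{\theta}_n \in K_{\epsilon}\}$ into a uniform deviation event for the empirical process. Because $\hat{\theta}_n$ solves $\sum_{j} h(X_j, \hat{\theta}_n) = 0$, on this event the empirical average of $h(X_j, \cdot)$ at $t = \hat{\theta}_n$ equals $0$, so
\[ \left| \frac{1}{n}\sum_{j=1}^n h(X_j, \hat{\theta}_n) - H(\hat{\theta}_n) \right| = |H(\hat{\theta}_n)| \ge \delta. \]
Hence $\{\hat{\theta}_n \in K_{\epsilon}\} \subset \{\sup_{t \in K_{\epsilon}} |\frac{1}{n}\sum_j h(X_j,t) - H(t)| \ge \delta\}$, and splitting into real and imaginary parts, it is contained in the union over $i \in \{1,2\}$ of the events $\{\sup_{t \in K_{\epsilon}}|\frac1n\sum_j h_i(X_j,t) - H_i(t)| \ge \delta/\sqrt{2}\}$, where $H_1 := \textup{Re}(H)$ and $H_2 := \textup{Im}(H)$. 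Each such event has the form $\{\frac1n\sum_j h_i(X_j, \cdot) \in F_i\}$ with $F_i := \{z \in \ell_{\infty}(K_{\epsilon}) : \sup_{t \in K_{\epsilon}}|z(t) - H_i(t)| \ge \delta/\sqrt{2}\}$ closed and not containing the function $H_i$.

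Next I would apply the upper bound of Lemma \ref{LDP-h}, for which the crucial point is that $\inf_{z \in F_i} I_i(z) > 0$. Since $I_i$ is a good rate function, if its infimum over the closed set $F_i$ were zero, then lower semicontinuity together with compactness of the sublevel sets would produce a zero of $I_i$ lying in $F_i$. But $H_i$ is the \emph{unique} zero of $I_i$: evaluation at a single coordinate $t$ is a continuous map $\ell_{\infty}(K_{\epsilon}) \to \mathbb{R}$, so by the contraction principle the one-dimensional marginal rate function is the scalar Cram\'er rate of $h_i(X_1,t)$, which is non-degenerate (as $X_1$ has a density, $h_i(X_1,t)$ is not almost surely constant) and therefore vanishes only at $H_i(t)$. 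Thus $I_i(z) = 0$ forces $z(t) = H_i(t)$ for all $t \in K_{\epsilon}$, i.e. $z = H_i \notin F_i$, a contradiction. This yields $\inf_{F_i} I_i > 0$, and Lemma \ref{LDP-h}(iii) then gives exponential decay of the associated outer probabilities $P^{*}(\frac1n\sum_j h_i(X_j, \cdot) \in F_i)$. Summing the three exponentially small contributions produces $P(|\hat{\theta}_n - \theta| \ge \epsilon) = O(\beta^n)$ for a suitable $\beta \in (0,1)$.

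The main obstacle is precisely the positivity $\inf_{F_i} I_i > 0$: identifying $H_i$ as the unique zero of the infinite-dimensional rate function and upgrading the statement that $F_i$ avoids that zero into a strictly positive infimum. Two further technical points require care. One must dominate the genuinely measurable event $\{\hat{\theta}_n \in K_{\epsilon}\}$ by the outer measure of the sup-deviation events, in accordance with the $P^{*}$ formulation of Lemma \ref{LDP-h}; and one must use the upper bound in a form valid for all large $n$, so that the resulting estimate is a true $O(\beta^n)$ bound rather than a statement along a subsequence.
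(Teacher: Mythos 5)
Your proposal is correct, and it rests on the same two pillars as the paper's own proof --- localization via Lemma \ref{LDP-cpt}, plus the functional LDP upper bound of Lemma \ref{LDP-h} applied to a closed set that the event $\{\hat{\theta}_n \in K,\ |\hat{\theta}_n-\theta|\ge\epsilon\}$ is forced into because $\hat{\theta}_n$ zeroes the empirical average of $h$ --- but the two nontrivial steps are implemented genuinely differently. For the event reduction, you keep the whole region $K_\epsilon$ and split the \emph{deviation} into real and imaginary parts, landing in the sets $F_i=\{z\in\ell_\infty(K_\epsilon):\sup_{t}|z(t)-H_i(t)|\ge\delta/\sqrt{2}\}$, i.e.\ uniform deviation from the mean function; the paper instead partitions the \emph{parameter region} $F$ into $F_1\cup F_2$ according to which component of $H$ is bounded below by $\epsilon_0/4$, and works with the zero sets $C_i=\{z\in\ell_\infty(F_i\cap K):\inf_{t}|z(t)|=0\}$. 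For the key positivity $\inf_{F_i}I_i>0$, you argue softly: goodness of $I_i$ and closedness of $F_i$ would produce a minimizing zero of $I_i$ inside $F_i$, while the contraction principle applied to coordinate evaluations (together with uniqueness of rate functions on $\mathbb{R}$ and the fact that the Cram\'er rate of the bounded variable $h_i(X_1,t)$ vanishes only at its mean) shows the unique zero of $I_i$ is $H_i\notin F_i$. The paper instead computes $\inf_{C_i}I_i$ explicitly via the exchange-of-infima identity \cite[Eq. (31)]{Arcones2006}, reducing it to $\sup_{t\in F_i\cap K}\inf_{\lambda\in\mathbb{R}}\log E[\exp(\lambda h_i(X_1,t))]$, shows each term is negative because the derivative at $\lambda=0$ equals $H_i(t)\ne 0$ (precisely what the $F_1/F_2$ partition guarantees), and concludes by lower semicontinuity of the pointwise rate in $t$ over the compact set. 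The trade-off: your route avoids Arcones' identity and the parameter-space partition, at the cost of invoking the contraction principle and rate-function uniqueness; the paper's route is more computational and produces an explicit expression for the exponential rate (consonant with its discussion of inaccuracy rates), whereas yours yields positivity only qualitatively --- which is all the proposition needs.
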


\begin{proof}
By Lemma \ref{LDP-cpt}, there exists a compact set $K$ of $\mathbb{H}$ and $\alpha \in (0,1)$ such that 
\[  P\left(\hat{\theta}_n \notin K  \right) = O(\alpha^n). \]

Let 
\[ F := \left\{t \in \mathbb{H} : |t - \theta| \ge \epsilon \right\}.  \]
Then, 
\[  P\left( \left|\hat{\theta}_n - \theta\right| \ge \epsilon\right) \le P\left(\hat{\theta}_n \notin K  \right) + P\left(\hat{\theta}_n \in K \cap F  \right). \] 
Hence it suffices to show that there exists a constant $\beta \in (0,1)$ such that 
\[ P\left(\hat{\theta}_n \in K \cap F  \right) =  O(\beta^n). \] 

Let $H_1 (t)$ and $H_2 (t)$ be the real and imaginary parts of $H(t)$, respectively.  
Since 
\[ \epsilon_0 := \inf_{t \in F} |H(t)| > 0,  \]
we see that 
\[ \left\{t \in F : \left|H_1 (t)\right| \le \epsilon_0 /4  \right\} \subset \left\{t \in F : \left|H_2 (t)\right| \ge \epsilon_0 /4  \right\}.  \]
Let 
\[ F_1 := \left\{t \in F : \left|H_1 (t)\right| \ge \epsilon_0 /4  \right\},  \textup{ and }   F_2  := \left\{t \in F : \left|H_1 (t)\right| \le \epsilon_0 /4  \right\}.  \]
Then $F = F_1 \cup F_2$. 
Therefore, 
\[ P\left(\hat{\theta}_n \in K \cap F  \right) \le P^{*}\left(\textup{there exists $t \in F \cap K$ such that } \frac{1}{n} \sum_{j=1}^{n} h (X_j, t) = 0 \right) \]
\[ \le P^{*}\left(\textup{there exists $t \in F_1 \cap K$ such that } \frac{1}{n} \sum_{j=1}^{n} h_1 (X_j, t) = 0 \right) \] 
\[+ P^{*}\left(\textup{there exists $t \in F_2 \cap K$ such that } \frac{1}{n} \sum_{j=1}^{n} h_2 (X_j, t) = 0 \right). \]

Let   
\[ C_i := \left\{z \in \ell_{\infty}(F_i \cap K) : \inf_{t \in F_i \cap K} \left|z(t) \right| = 0  \right\}, \ i = 1,2. \]
These are closed subsets of $\ell_{\infty} (F_i \cap K)$, $i = 1,2$.    
We see that 
\[ P^{*}\left(\textup{there exists $t \in F_i \cap K$ such that } \frac{1}{n} \sum_{j=1}^{n} h_i (X_j, t) = 0 \right) = P^* \left( \frac{1}{n} \sum_{j=1}^{n} h_i (X_j, t) \in C_i \right).  \]

Then, by Lemma \ref{LDP-h}, we see that  for $i = 1,2$,  
\[ \limsup_{n \to \infty} \frac{1}{n} \log P^{*}\left( \frac{1}{n} \sum_{j=1}^{n} h_i (X_j, t) \in C_i \right) \]
\[ \le - \inf_{z \in  C_i}   \inf\left\{J(l) : l \in \mathcal{L}^{*}, \  l\left(h_i(\cdot, t)\right) = z(t) \textup{ for every } t \in F_i \cap K \right\}  \]
\begin{equation}\label{LDP-J1} 
= -\inf \left\{J(l) : l \in \mathcal{L}^{*}, \ \inf_{t  \in F_i \cap K} \left| l\left(h_i(\cdot, t)\right) \right|  = 0 \right\}.  
\end{equation}

By \cite[Eq. (31)]{Arcones2006}, we see that 
\[  -\inf \left\{J(l) : l \in \mathcal{L}^{*}, \ \inf_{t  \in F_i \cap K} \left| l\left(h_i(\cdot, t)\right) \right|  = 0 \right\} =  -\inf_{t  \in F_i \cap K}  \inf \left\{J(l) : l \in \mathcal{L}^{*}, \   l\left(h_i(\cdot, t)\right)  = 0 \right\} \]
\begin{equation}\label{LDP-I1} 
= \sup_{t  \in F_i \cap K} \inf_{\lambda \in \mathbb{R}}  \log E\left[\exp\left( \lambda h_i (X_1, t) \right)\right].  
\end{equation}

Since $h_i$ is bounded,  it holds that 
\[ \frac{d}{d\lambda} \bigg|_{\lambda = 0} \log E\left[\exp\left( \lambda h_i (X_1, t) \right)\right] = H_i (t) \ne 0, \ t \in F_i \cap K.   \]

By this and the fact that $\log E\left[\exp\left( \lambda h_i (X_1, t) \right)\right] = 0$ if $\lambda = 0$, 
we see that 
\[ \inf_{\lambda \in \mathbb{R}} \log E\left[\exp\left( \lambda h_i (X_1, t) \right)\right] < 0, \ \ t \in F_i \cap K.  \]

Let 
\[ I_i (t) := - \inf_{\lambda \in \mathbb{R}} \log E\left[\exp\left( \lambda h_i (X_1, t) \right)\right], \ \ t \in F_i \cap K. \]
Then, $I_i (t) > 0, \ \ t \in F_i \cap K$ and this function is lower-semicontinuous. 
Hence, 
\[ \inf_{t \in F_i \cap K} I_i (t) > 0.  \]
By this, \eqref{LDP-J1},  and \eqref{LDP-I1}, 
\[ \limsup_{n \to \infty} \frac{1}{n} \log P^{*}\left( \frac{1}{n} \sum_{j=1}^{n} h_i (X_j, t) \in C_i \right)  \le - \inf_{t \in F_i \cap K} I_i (t) < 0, \ i = 1,2. \]
Thus we have the assertion. 
\end{proof}

\begin{proof}[Proof of Theorem \ref{Bahadur-MLE}]
We adopt \cite[Theorem 3]{Shen2001}.  
The condition being LD-consistent in the statement of \cite[Theorem 3]{Shen2001} is equivalent with Proposition \ref{LD-consistent}.  
Let $\psi(x, \theta) := \log p(x; (\mu, \sigma))$. 
Then, we can show \cite[(C1) and (C2) in Theorem 3]{Shen2001} by some calculations for the first and second orders of the partial derivatives of $\psi$ with respect to $\mu$ and $\sigma$. 
Now we can apply \cite[Theorem 3]{Shen2001}, and then, the assertion follows from this and \cite[Proposition 2]{Shen2001}.   
\end{proof}

\subsection{Proofs of assertions in Section 4}

\begin{proof}[Proof of Theorem \ref{main}]
We see that for every $C_1 \in (0,1)$ and $\epsilon > 0$, 
\[ P\left( \left|Z_n - \theta\right| > \epsilon\right) \le P\left(|Z_n - \hat{\theta}_n| > c_1 \epsilon \right) + P\left(|\hat{\theta}_n - \theta| >  (1- c_1) \epsilon \right). \]

By \eqref{b-formula}, 
\[ \lim_{c_1  \to +0} b((1-c_1)\epsilon, \theta) = b(\epsilon, \theta). \]

By this and Theorem \ref{Bahadur-MLE},   
we see that for every $\eta > 0$, there exists $\epsilon_0 > 0$ such that  for every  $\epsilon \in (0, \epsilon_0)$ and $c_1 \in (0,\epsilon_0)$,   
\[ P\left( \left|\hat{\theta}_n - \theta\right| >  (1- c_1) \epsilon \right) = O\left(\exp\left(-n (b(\epsilon, \theta) - \eta) \right) \right), \]
where the large order depends on $c_1$, $\eta$ and $\epsilon$.   

By this and \eqref{b-order}, it suffices to show that for sufficiently small  $\epsilon > 0$ 
\begin{equation}\label{WTS-main} 
P\left( \left|Z_n - \hat{\theta}_n \right| >  \epsilon \right) = O\left( \exp(- n \epsilon^{3/2}) \right).  
\end{equation} 

By using the fact that $\hat{\theta}_n$ is MLE and \eqref{h-difference}, we see that 
\[ Z_n - \hat{\theta}_n = Y_n - \hat{\theta}_n  + \frac{2 \textup{Im}(Y_n) i}{n} \sum_{j =1}^{n} \left(\frac{X_j - Y_n}{X_j - \overline{Y_n}}   
-  \frac{X_j - \hat{\theta}_n}{X_j - \overline{\hat{\theta}_n}}  \right) \]
\[ =  \frac{Y_n -  \hat{\theta}_n}{n} \sum_{j=1}^{n} \frac{X_j - Y_n}{X_j - \overline{Y_n}}   + \frac{\overline{Y_n -  \hat{\theta}_n}}{n} \sum_{j=1}^{n} \frac{X_j - \hat{\theta}_n}{X_j - \overline{\hat{\theta}_n}}  -  \frac{\overline{Y_n -  \hat{\theta}_n}}{n} \sum_{j=1}^{n} \frac{X_j - \hat{\theta}_n}{X_j - \overline{\hat{\theta}_n}} \frac{X_j - Y_n}{X_j - \overline{Y_n}}.      \]

Hence, 
\[ P\left( \left|Z_n - \hat{\theta}_n \right| >  \epsilon \right)  \le P\left( |Y_n -  \hat{\theta}_n| \cdot  \left|\frac{1}{n} \sum_{j=1}^{n} \frac{X_j - Y_n}{X_j - \overline{Y_n}}\right| > \frac{\epsilon}{3} \right) \]
\[ +  P\left( |Y_n -  \hat{\theta}_n| \cdot \left|\frac{1}{n} \sum_{j=1}^{n} \frac{X_j - \hat{\theta}_n}{X_j - \overline{\hat{\theta}_n}}\right| > \frac{\epsilon}{3} \right)  +  P\left( |Y_n -  \hat{\theta}_n| \cdot  \left|\frac{1}{n} \sum_{j=1}^{n} \frac{X_j - \hat{\theta}_n}{X_j - \overline{\hat{\theta}_n}}     \frac{X_j - Y_n}{X_j - \overline{Y_n}}             \right| > \frac{\epsilon}{3} \right). \] 

By Theorems \ref{LDP-chernoff} and \ref{Bahadur-MLE}, 
we see that there exists a constant $c_2 > 0$ such that for sufficiently small $\eta > 0$, 
\begin{equation}\label{tail-Y}  
P\left( \left|Y_n - \theta \right| > \eta \right) = O(\exp(-c_2 n \eta^2)) 
\end{equation}
and 
\begin{equation}\label{tail-MLE}  
P\left( \left|\hat{\theta}_n - \theta  \right| > \eta \right) = O(\exp(-c_2 n \eta^2)).  
\end{equation} 
Hence,  there exists a constant $c_3 > 0$ such that for sufficiently small $\eta > 0$, 
\begin{equation}\label{tail-Y-MLE}   
P\left( \left|Y_n - \hat{\theta}_n \right| > \eta \right) = O(\exp(-c_3 n \eta^2)).  
\end{equation}

By the Cram\'er-Chernoff method, 
we also obtain that  there exists a constant $c_4 > 0$ such that for sufficiently small $\eta > 0$, 
\begin{equation}\label{tail-Cayley-1} 
P\left(   \left|\frac{1}{n} \sum_{j=1}^{n} \frac{X_j - \theta}{X_j - \overline{\theta}}  \right| > \eta \right) = O(\exp(-c_4 n \eta^2)).  
\end{equation}
and 
\begin{equation}\label{tail-Cayley-2} 
P\left(   \left|\frac{1}{n} \sum_{j=1}^{n} \left(\frac{X_j - \theta}{X_j - \overline{\theta}}\right)^2   \right| > \eta \right) = O(\exp(-c_4 n \eta^2)).  
\end{equation}
In the above we have used that 
\[ E \left[ \frac{X_j - \theta}{X_j - \overline{\theta}} \right] =  E \left[ \left(\frac{X_j - \theta}{X_j - \overline{\theta}}\right)^2 \right] = 0, \]
both of which follow from the Cauchy integral formula. 

By \eqref{after-h-difference}, 
we see that 
\[ \left|\frac{1}{n} \sum_{j=1}^{n} \frac{X_j - Y_n}{X_j - \overline{Y_n}}\right| \le \left|\frac{1}{n} \sum_{j=1}^{n} \frac{X_j - \theta}{X_j - \overline{\theta}}\right| + 2 \frac{|Y_n - \theta|}{\textup{Im}(\theta)}  \]
and
\[ \left|\frac{1}{n} \sum_{j=1}^{n} \frac{X_j - \hat{\theta}_n}{X_j - \overline{\hat{\theta}_n}}\right| \le \left|\frac{1}{n} \sum_{j=1}^{n} \frac{X_j - \theta}{X_j - \overline{\theta}}\right| + 2 \frac{\left|\hat{\theta}_n - \theta\right|}{\textup{Im}(\theta)}. \]

By these estimates, \eqref{tail-Y-MLE},  \eqref{tail-Cayley-1}, and \eqref{tail-Y}, 
 we see that there exists a constant $c_5 > 0$ such that for sufficiently small $\epsilon > 0$, 
\[ P\left( \left|Y_n -  \hat{\theta}_n\right| \cdot  \left|\frac{1}{n} \sum_{j=1}^{n} \frac{X_j - Y_n}{X_j - \overline{Y_n}}\right| > \frac{\epsilon}{3} \right) \]
\[\le 2 P\left( \left|Y_n -  \hat{\theta}_n\right|  > \sqrt{\frac{\epsilon}{6}} \right) + P\left(\left|\frac{1}{n} \sum_{j=1}^{n} \frac{X_j - \theta}{X_j - \overline{\theta}}\right| > \sqrt{\frac{\epsilon}{6}} \right) + P\left(2 \frac{|Y_n - \theta|}{\textup{Im}(\theta)}   > \sqrt{\frac{\epsilon}{6}} \right) \]
\[ = O\left(\exp(-c_5 n \epsilon) \right). \]

In the same manner, 
by noting \eqref{tail-Y-MLE},  \eqref{tail-Cayley-1}, and \eqref{tail-MLE}, 
we see that  there exists a constant $c_6 > 0$ such that for sufficiently small $\epsilon > 0$, 
\[  P\left( \left|Y_n -  \hat{\theta}_n\right| \cdot  \left|\frac{1}{n} \sum_{j=1}^{n} \frac{X_j -  \hat{\theta}_n}{X_j - \overline{ \hat{\theta}_n}}\right| > \frac{\epsilon}{3} \right) = O\left(\exp(-c_6 n \epsilon) \right). \]

We remark that 
\[  \frac{X_j - \hat{\theta}_n}{X_j - \overline{\hat{\theta}_n}}     \frac{X_j - Y_n}{X_j - \overline{Y_n}} -  \left(\frac{X_j - \theta}{X_j - \overline{\theta}}\right)^2 \]
\[ = \left( \frac{X_j - \hat{\theta}_n}{X_j - \overline{\hat{\theta}_n}}  -  \frac{X_j - \theta}{X_j - \overline{\theta}}\right)   \frac{X_j - Y_n}{X_j - \overline{Y_n}} +   \left(\frac{X_j - Y_n}{X_j - \overline{Y_n}} - \frac{X_j - \theta}{X_j - \overline{\theta}}\right) \frac{X_j - \theta}{X_j - \overline{\theta}}. \]

By this, \eqref{after-h-difference} and the fact that $|X_j - \theta| = |X_j - \overline{\theta}|$, 
we see that 
\[ \left|\frac{1}{n} \sum_{j=1}^{n} \frac{X_j - \hat{\theta}_n}{X_j - \overline{\hat{\theta}_n}}     \frac{X_j - Y_n}{X_j - \overline{Y_n}}             \right|\]
\[  \le  \left|\frac{1}{n} \sum_{j=1}^{n} \left(\frac{X_j - \theta}{X_j - \overline{\theta}}\right)^2   \right|  + 2 \frac{|Y_n - \theta|}{\textup{Im}(\theta)}   + 2 \frac{|  \hat{\theta}_n - \theta|}{\textup{Im}(\theta)} + 4 \frac{|Y_n - \theta| \cdot |  \hat{\theta}_n - \theta|}{\textup{Im}(\theta)^2}.  \]



By these estimates, we see that for every $\epsilon > 0$, 
\[ P\left( |Y_n -  \hat{\theta}_n| \cdot  \left|\frac{1}{n} \sum_{j=1}^{n} \frac{X_j - \hat{\theta}_n}{X_j - \overline{\hat{\theta}_n}}     \frac{X_j - Y_n}{X_j - \overline{Y_n}}             \right| > \frac{\epsilon}{3} \right) \]
\[\le 4 P\left( \left|Y_n -  \hat{\theta}_n\right|  > \sqrt{\frac{\epsilon}{12}} \right) + P\left(\left|\frac{1}{n} \sum_{j=1}^{n}  \left(\frac{X_j - \theta}{X_j - \overline{\theta}} \right)^2 \right| > \sqrt{\frac{\epsilon}{12}} \right) \]
\[+ P\left(2 \frac{|Y_n - \theta|}{\textup{Im}(\theta)}   > \sqrt{\frac{\epsilon}{12}} \right) + P\left(2 \frac{|\hat{\theta}_n - \theta|}{\textup{Im}(\theta)}   > \sqrt{\frac{\epsilon}{12}} \right) + P\left(4 \frac{|Y_n - \theta| \cdot |  \hat{\theta}_n - \theta|}{\textup{Im}(\theta)^2}  > \sqrt{\frac{\epsilon}{12}} \right). \]

We remark that  for sufficiently small $\epsilon > 0$, 
\[ P\left(4 \frac{|Y_n - \theta| \cdot |  \hat{\theta}_n - \theta|}{\textup{Im}(\theta)^2}  > \sqrt{\frac{\epsilon}{12}} \right) \]
\[ \le P\left(2 \frac{|Y_n - \theta|}{\textup{Im}(\theta)}   > 1  \right) +  P\left(4 \frac{|Y_n - \theta| \cdot |  \hat{\theta}_n - \theta|}{\textup{Im}(\theta)^2}  > \sqrt{\frac{\epsilon}{12}}, \ 2 \frac{|Y_n - \theta|}{\textup{Im}(\theta)} \le 1  \right)  \]
\[ \le P\left(2 \frac{|Y_n - \theta|}{\textup{Im}(\theta)}   > \sqrt{\frac{\epsilon}{12}} \right) + P\left(2 \frac{|\hat{\theta}_n - \theta|}{\textup{Im}(\theta)}   > \sqrt{\frac{\epsilon}{12}} \right).  \] 

By these estimates and  \eqref{tail-Cayley-2}, \eqref{tail-Y}, \eqref{tail-MLE} and  \eqref{tail-Y-MLE}, 
we see that  there exists a constant $c_7  > 0$ such that for sufficiently small $\epsilon > 0$, 
\[ P\left( |Y_n -  \hat{\theta}_n| \cdot  \left|\frac{1}{n} \sum_{j=1}^{n} \frac{X_j - \hat{\theta}_n}{X_j - \overline{\hat{\theta}_n}}     \frac{X_j - Y_n}{X_j - \overline{Y_n}}             \right| > \frac{\epsilon}{3} \right) = O\left(\exp(-c_7 n \epsilon) \right).\]

Thus we have \eqref{WTS-main} and this completes the proof. 
\end{proof} 



\begin{proof}[Proof of Theorem \ref{main-var}]
We see that 
\[ Z_n - \theta = Y_n - \theta + \frac{2 \textup{Im}(Y_n) i}{n} \sum_{j=1}^{n} \left(\frac{X_j - Y_n}{X_j - \overline{Y_n}} - \frac{X_j - \theta}{X_j - \overline{\theta}} \right) 
+ \frac{2 \textup{Im}(Y_n) i}{n} \sum_{j=1}^{n}\frac{X_j - \theta}{X_j - \overline{\theta}}. \]

By \eqref{after-h-difference}, 
\[ \left|  Y_n - \theta + \frac{2 \textup{Im}(Y_n) i}{n} \sum_{j=1}^{n} \left(\frac{X_j - Y_n}{X_j - \overline{Y_n}} - \frac{X_j - \theta}{X_j - \overline{\theta}} \right) \right| \le 9 \left|  Y_n - \theta \right|.  \]

We see that 
\[ \left| \frac{2 \textup{Im}(Y_n) i}{n} \sum_{j=1}^{n}\frac{X_j - \theta}{X_j - \overline{\theta}} \right| \le 2  \left|  Y_n - \theta \right|  +  2 \textup{Im}(\theta) \left| \frac{1}{n} \sum_{j=1}^{n}\frac{X_j - \theta}{X_j - \overline{\theta}} \right|. \]

Hence, 
\[ \sqrt{n}|Z_n - \theta| \le 11 \sqrt{n} |Y_n - \theta| + \frac{2\textup{Im}(\theta)}{\sqrt{n}} \left|  \sum_{j=1}^{n}\frac{X_j - \theta}{X_j - \overline{\theta}} \right|.  \]

Now we should recall that the generators of quasi-arithmetic means are restricted to the cases that 
$f(x) = \log (x+\gamma), \ \gamma \in \overline{\mathbb{H}}$ and $f(x) = \dfrac{x - \gamma}{x - \overline{\gamma}}, \ \gamma \in \mathbb{H}$. 
By \cite[Theorem 4.2(iii) and Theorem 4.4(iii)]{Akaoka2021-1} and \eqref{main-var-ass}, 
we can apply Billingsley \cite[Theorem 3.6]{Billingsley1999} 
and have  that
$ \left\{ \left| \sqrt{n} (Y_n - \theta) \right|^2 \right\}_n$ is uniformly integrable. 
By the central limit theorem, 
\[ \frac{1}{\sqrt{n}}  \sum_{j=1}^{n}\frac{X_j - \theta}{X_j - \overline{\theta}} \Rightarrow N\left(0, \frac{1}{2} I_2 \right),  \  n \to \infty. \]
By the Cauchy integral formula, 
we also see that 
$E\left[ \left|  \frac{1}{\sqrt{n}}  \sum_{j=1}^{n}\frac{X_j - \theta}{X_j - \overline{\theta}} \right| \right] = 1$. 
Therefore we can apply Billingsley \cite[Theorem 3.6]{Billingsley1999} again 
and have  that
$ \left\{ \left|  \frac{1}{\sqrt{n}}  \sum_{j=1}^{n}\frac{X_j - \theta}{X_j - \overline{\theta}} \right| ^2 \right\}_n$ is uniformly integrable. 
Therefore, $ \left\{ \left| \sqrt{n}(Z_n - \theta)  \right|^2 \right\}_n$ is also uniformly integrable
By this and \eqref{Z-Fisher}, we have the assertion. 
\end{proof}

\subsection{Proofs of assertions in Section 5}

\begin{proof}[Proof of Theorem \ref{Bahadur-cc}]
We first remark that for sufficiently small $\epsilon > 0$, 
\[ \widetilde b(\epsilon, w) = \log \left(1+ \frac{\epsilon^2}{(1 - |w|^2)(1 - (|w| + \epsilon)^2)} \right). \]
Hence, 
\begin{equation}\label{tilde-b-order} 
\lim_{\epsilon \to +0} \frac{\widetilde b(\epsilon, w)}{\epsilon^2} = \frac{1}{(1 - |w|^2)^2}.  
\end{equation}

Let $\theta := \phi_{\alpha}^{-1}(w)$.  
By the complex mean-value  theorem \cite[Theorem 2.2]{Evard1992},  
\[ P(|W_n - w| > \epsilon) \le P\left(|Z_n - \theta| > \frac{\epsilon}{\sup_{z; |z-\theta| \le \epsilon} |\phi_{\alpha}^{\prime}(z)|}\right).\]
Let 
\[ \epsilon^{\prime} :=  \frac{\epsilon}{\sup_{z; |z-\theta| \le \epsilon} \left|\phi_{\alpha}^{\prime}(z)\right|}. \]
Then, by \eqref{tilde-b-order} and \eqref{b-order}, 
\[ \lim_{\epsilon \to +0} \frac{\widetilde b(\epsilon, w)}{b(\epsilon^{\prime}, \theta)} = \lim_{\epsilon \to +0} \frac{\widetilde b(\epsilon, w)}{\epsilon^2} \frac{(\epsilon^{\prime})^2}{b(\epsilon^{\prime}, \theta)} \frac{\epsilon^2}{(\epsilon^{\prime})^2} = \frac{4 \textup{Im}(\theta)^2  \left|\phi_{\alpha}^{\prime}(\theta)\right|^2}{(1 - |w|^2)^2} = 1. \]

Now the assertion follows from this and Theorem \ref{main}. 
\end{proof}

\begin{proof}[Proof of Theorem \ref{Var-CR-cc}]
Let $\theta := \phi_{\alpha}^{-1}(w)$. 
We see that 
\[ n E\left[ \left|\phi_{\alpha}(Z_n) - \phi_{\alpha}(\theta) - \phi_{\alpha}^{\prime}(\theta) (Z_n - \theta)\right|^2\right] = E\left[ n |Z_n - \theta|^2 \left| \frac{\phi_{\alpha}(Z_n) - \phi_{\alpha}(\theta)}{Z_n - \theta} -  \phi_{\alpha}^{\prime}(\theta)  \right|^2 \right]. \]
We also have that $\sup_{z \in \mathbb{H}} |\phi_{\alpha}^{\prime}(z)| \le 2 \textup{Im}(\alpha)$ 
and 
$\sup_{z \in \mathbb{H}} |\phi_{\alpha}^{\prime\prime}(z)| \le 4 \textup{Im}(\alpha)$.

By using \cite[Theorem 2.2]{Evard1992},  
\[ \left| \frac{\phi_{\alpha}(Z_n) - \phi_{\alpha}(\theta)}{Z_n - \theta} -  \phi_{\alpha}^{\prime}(\theta)  \right| \le 4 \textup{Im}(\alpha)\min\left\{1, |Z_n - \theta| \right\}. \]

Let $\epsilon > 0$. 
Then, 
\[ E\left[ n |Z_n - \theta|^2 \left| \frac{\phi_{\alpha}(Z_n) - \phi_{\alpha}(\theta)}{Z_n - \theta} -  \phi_{\alpha}^{\prime}(\theta)  \right|^2 \right] \]
\[\le 4 \textup{Im}(\alpha) E\left[ n |Z_n - \theta|^2, \ |Z_n - \theta| > \epsilon \right] + 4\textup{Im}(\alpha) \epsilon^2 E\left[ n |Z_n - \theta|^2, \ |Z_n - \theta| \le \epsilon \right]. \]

By the final part of the proof of Theorem \ref{main-var},  
$\lim_{n \to \infty} P(|Z_n - \theta| > \epsilon) = 0$ 
and 
$\left\{n |Z_n - \theta|^2 \right\}_n$ is uniformly integrable.  
Hence, 
\[ \lim_{n \to \infty} E\left[ n |Z_n - \theta|^2, \ |Z_n - \theta| > \epsilon \right] = 0.  \]
Hence, 
\[ \limsup_{n \to \infty} E\left[ n |Z_n - \theta|^2 \left| \frac{\phi_{\alpha}(Z_n) - \phi_{\alpha}(\theta)}{Z_n - \theta} -  \phi_{\alpha}^{\prime}(\theta)  \right|^2 \right] \le 4 \textup{Im}(\alpha) \epsilon^2 \sup_{n} E\left[ n |Z_n - \theta|^2\right].  \]
Since we can take $\epsilon > 0$ arbitrarily small, we see that 
\[ \lim_{n \to \infty} n E\left[ \left|\phi_{\alpha}(Z_n) - \phi_{\alpha}(\theta) - \phi_{\alpha}^{\prime}(\theta) (Z_n - \theta)\right|^2\right] = 0. \] 

Hence, 
\[ \lim_{n \to \infty} n E\left[|\phi_{\alpha}(Z_n) - \phi_{\alpha}(\theta)|^2\right] = 4  \textup{Im}(\theta)^2 |\phi_{\alpha}^{\prime}(\theta)|^2.  \]

The assertion now follows from this and 
\[ 4  \textup{Im}(\phi_{\alpha}^{-1}(w))^2 |\phi_{\alpha}^{\prime}(\phi_{\alpha}^{-1}(w))|^2 = \left(1 - |w|^2 \right)^2. \]
\end{proof}

\appendix
\section{Numerical computations}

We perform simulation studies in the R-project \cite{R2021}  to illustrate the properties of the one-step estimators $(Z_n)_n$. 
The version of R is 4.1.1. 
We deal with $n E[ | Z_n - \theta |^2 ]/ \textup{Im}(\theta)^2 = n E[ | Z_n - (\mu+\sigma i) |^2 ]/ \sigma^2$ appearing in Theorem \ref{main-var}. 

For the generators of quasi-arithmetic means, we consider the following four cases that $f_1 (x) = \log x$, $f_2 (x) = \log(x+i)$, $f_3 (x) = 1/(x+i)$ and $f_4 (x) = 1/(x+2i)$. 
For the location and the scale, we consider the following four cases that 
$(\mu, \sigma) = (0,10), (10,1), (0,10), (10,10).$
For the sizes of samples, we consider the following five cases that $n=10, 50, 100, 500, 1000$. 

In each choice of the triplet $(\mu, \sigma, n)$, we compute the average of the values $n|Z_n  - \theta|^2 / \textup{Im}(\theta)^2$ for  $10^6$ samples of size $n$ generated by the {\tt rcauchy()} function in R. 
See  Table \ref{tab:nonshift} for results. 
In this section, we round off the outputs to three decimal places.  

{\small
\begin{table}[H]
\begin{minipage}{0.49\textwidth}
	    \begin{gather*}
		\begin{array}{c|c|c|c|c}
		n      & f_1  & f_2  & f_3  &  f_4   \\
		\hline
		10      & 5.453 & 6.551 & 4.614 & 5.129  \\
		50      & 4.144 & 4.397 & 4.083 & 4.208 \\
		100      &  4.056 & 4.186 & 4.034 & 4.097\\
		500      & 4.016 & 4.043 & 4.013 & 4.025 \\
		1000      &  4.008 & 4.022   & 4.007 & 4.013
		\end{array}\\
		(\mu, \sigma) = (0,1)
		\end{gather*}
	
	\end{minipage}
\begin{minipage}{0.49\textwidth}
	 \begin{gather*}
		\begin{array}{c|c|c|c|c}
		n      & f_1  & f_2  & f_3  &  f_4   \\
		\hline
		10      & 45.014 & 38.179 & 91.065 & 39.535  \\
		50      & 29.718 & 22.231 & 48.523 & 20.440  \\
		100      & 19.742  & 14.915  & 35.411 & 14.386 \\
		500      & 7.420 & 6.393 & 13.280 & 6.572  \\
		1000      & 5.714 & 5.206 & 8.867 & 5.313
		\end{array}\\
		(\mu, \sigma) = (10,1)
	\end{gather*}
	\end{minipage}\\

\begin{minipage}{0.49\textwidth}
	 \begin{gather*}
		\begin{array}{c|c|c|c|c}
		n      & f_1  & f_2  & f_3  &  f_4   \\
		\hline
		10      & 5.463 & 5.463 & 25.377 & 9.934   \\
		50      & 4.138 & 4.152 & 5.672 & 4.239  \\
		100     & 4.063 & 4.072  & 4.571 & 4.076\\
		500     & 4.009 & 4.011 & 4.077 & 4.007 \\
		1000   & 4.005 & 4.006 & 4.037 & 4.003
		\end{array}\\
	(\mu, \sigma) = (0,10)
	\end{gather*}
	\end{minipage}
\begin{minipage}{0.49\textwidth}
	 \begin{gather*}
		\begin{array}{c|c|c|c|c}
		n      & f_1  & f_2  & f_3  &  f_4   \\
		\hline
		10      & 7.066 & 6.795 & 40.737 & 15.413   \\
		50      & 4.491 & 4.442 & 9.545 &  5.266\\
		100    & 4.238 &  4.214 &  6.367 & 4.559\\
		500    & 4.052 & 4.047 & 4.403 & 4.104 \\
		1000  & 4.028 &  4.026 & 4.200 & 4.055
		\end{array}\\
	(\mu, \sigma) = (10,10)
	\end{gather*}
	\end{minipage}\\
	[1\baselineskip]
	\caption{Tables for simulated values for $n E[ | Z_n - (\mu+\sigma i) |^2 ]/ \sigma^2$}
	\label{tab:nonshift}
\end{table}
}

By these numerical computations in Table \ref{tab:nonshift}, we conjecture that 
\[ n E[ | Z_n - \theta |^2 ] \ge 4 \, \textup{Im}(\theta)^2 \]
for every $n$. 
We remark that  the Cram\'er-Rao bound cannot be applied, because $Z_n$ might not be unbiased. 

We now consider the case of the MLE $\hat{\theta}_n$.  
Let $z \in \mathbb{H}$. 
For every $y \in \mathbb R$ and every $t > 0$,  $z$ is the maximal likelihood estimate of $\{x_1, \cdots, x_n\}$ if and only if $tz+x$ is the maximum likelihood estimate of $\{tx_1+y, \cdots, tx_n +y\}$. 
Furthermore, the joint distribution of $(tX_1+y, \cdots, tX_n +y)$ under $P_{y + ti}$ is identical with the joint distribution of $(X_1, \cdots, X_n)$ under $P_{i}$. 
Hence, the distribution of $t \hat{\theta}_n + y$ under $P_i$ is identical with the distribution of $\hat{\theta}_n$ under $P_{ti+y}$.  
Hence, the distribution of $\dfrac{\hat{\theta}_n -\mu}{\sigma}$ is identical with that of $\hat{\theta}_n$ under $\theta = i$. 
Therefore, we can assume the distribution is the standard Cauchy distribution. 

For numerical computations for $n E[ | \hat{\theta}_n - \theta |^2 ]/ \textup{Im}(\theta)^2 = n E[ | \hat{\theta}_n - (\mu+\sigma i) |^2 ]/ \sigma^2$, as in Kravchuk-Pollett \cite{Kravchuk2012}, 
we use the {\tt nlminb()} function in R. 
Here we assume that the real and imaginary parts of the initial point of the algorithm are given by the one-step estimator $Z_n$ associated with $f_3$. 
The results are summarized in Table \ref{tab:MLE}. 

\begin{table}[H]
\[
\begin{array}{c|c}
		n      &   V_n   \\
		\hline
		10      &    5.732 \\
		50      &    4.254 \\
		100      &  4.117\\
		500      & 4.029\\
		1000      & 4.015
		\end{array}
\]\\
\caption{Tables for simulated values for $V_n = n E[ | \hat{\theta}_n - (\mu+\sigma i) |^2 ]/ \sigma^2$}
		\label{tab:MLE}
\end{table}

By Table \ref{tab:MLE}, 
we conjecture that 
\[ n \frac{E[ | \hat{\theta}_n - \theta |^2 ]}{\textup{Im}(\theta)^2} = 4 + O(n^{-1}), \ n \to \infty, \]
which is compatible with the asymptotic expansion for the MLE.

The one-step estimators $(Z_n)_n$ do not have such invariance as the MLE has, so the value of $n E[ | Z_n - \theta|^2 ]/ \textup{Im}(\theta)^2$ depends on the value of $\theta$. 
As in the case that $(\mu,\sigma) = (10,1)$ in Table \ref{tab:nonshift}, if the ratio between the location and the scale is large, then, the performances of the one-step estimators could be bad in particular for samples of small sizes. 
We consider adjusting the median in the definition of $Y_n$ as in \cite{Kravchuk2012}. 

Let $M_n$ be the median of $\{X_1, \cdots, X_n\}$. 
Let 
\[ \widetilde Y_n := M_n + f^{-1} \left( \frac{1}{n} \sum_{j=1}^{n} f\left(X_j - M_n \right)  \right). \]
Let 
\[ \widetilde Z_n :=  \widetilde{Y_{n}} + \frac{2 \textup{Im}(\widetilde{Y_{n}} ) i}{n} \sum_{j=1}^{n} h(X_j, \widetilde{Y_{n}}), \]
which is the one-step estimator of $(\widetilde Y_n)_n$. 
Although we are not sure whether the conclusion of Theorem 4.2 holds or not for $(\widetilde Z_n)_n$, 
we deal with $n E[ | \widetilde Z_n - \theta |^2 ] / \textup{Im}(\theta)^2 = n E[ | \widetilde Z_n - (\mu+\sigma i) |^2 ]/ \sigma^2$. 

{\small 
\begin{table}[H]
\begin{minipage}{0.49\textwidth}
	\begin{gather*}
		\begin{array}{c|c|c|c|c}
		n      & f_1  & f_2  & f_3  &  f_4   \\
		\hline
		10      & 5.884 & 6.669 & 5.954 &  5.621  \\
		50      & 4.279 & 4.429 & 4.168 &  4.247\\
		100      & 4.129 & 4.204 & 4.075 & 4.116\\
		500      & 4.032 & 4.047 & 4.021 & 4.029 \\
		1000    & 4.016 & 4.024  & 4.011 & 4.015		
		\end{array}\\
	(\mu, \sigma) = (0,1)
	\end{gather*}
	\end{minipage}
\begin{minipage}{0.49\textwidth}
	\begin{gather*}
		\begin{array}{c|c|c|c|c}
		n      & f_1  & f_2  & f_3  &  f_4   \\
		\hline
	         10      & 5.891 & 6.665 & 5.904 & 5.654  \\
		50      & 4.284 & 4.436 & 4.173 & 4.253  \\
		100      &  4.134 &  4.211 & 4.079 & 4.120\\
		500      & 4.028 & 4.043 & 4.017 & 4.025 \\
		1000     &  4.009 &  4.017 & 4.004 & 4.008
		\end{array}\\
	(\mu, \sigma) = (10,1)
	\end{gather*}
	\end{minipage}\\

\begin{minipage}{0.48\textwidth}
	\begin{gather*}
		\begin{array}{c|c|c|c|c}
		n      & f_1  & f_2  & f_3  &  f_4   \\
		\hline
		10      & 5.887 & 5.878 & 81.877 & 27.160  \\
		50      & 4.273 & 4.268 & 5.270 & 4.316 \\
		100      & 4.137 &  4.134 & 4.439 & 4.148\\
		500      & 4.025 & 4.024 & 4.091 & 4.028 \\
		1000    & 4.013 & 4.013  & 4.047 & 4.014
		\end{array}\\
	(\mu, \sigma) = (0,10)
	\end{gather*}
	\end{minipage} \, \, 
\begin{minipage}{0.48\textwidth}
	\begin{gather*}
		\begin{array}{c|c|c|c|c}
		n      & f_1  & f_2  & f_3  &  f_4   \\
		\hline
		10      &  5.881 & 5.872 & 81.176 & 27.085 \\
		50      & 4.276 & 4.271 & 5.264 & 4.315 \\
		100      &  4.136 &  4.133 & 4.438 & 4.149\\
		500      & 4.030 & 4.030 & 4.096 &  4.033 \\
		1000    & 4.019 & 4.018  & 4.053 & 4.020
		\end{array}\\
	(\mu, \sigma) = (10,10)
	\end{gather*}
	\end{minipage}\\
	[1\baselineskip]
	\caption{Tables for simulated values for $n E[ | \widetilde Z_n - (\mu+\sigma i) |^2 ]/ \sigma^2$}
	\label{tab:shift}
\end{table}}

By these numerical computations, 
when we consider the median-adjusting,  
the logarithmic functions $f(x) = \log(x+\alpha)$, which are $f_1$ and $f_2$, are better than the M\"obius transformations $f(x) = 1/(x+\alpha)$, which are $f_3$ and $f_4$,  as the generators of the quasi-arithmetic means. 
Since $(M_n - \mu)/\sigma$ is the median of $\{ (X_j - \mu)/\sigma\}_j$, 
\[ \frac{\widetilde Y_n-\mu}{\sigma} = \frac{M_n -\mu}{\sigma} + f_1^{-1}\left( \frac{1}{n} \sum_{j=1}^{n} f_1 \left( \frac{X_j -\mu}{\sigma} - \frac{M_n -\mu}{\sigma} \right) \right)  \]
and
\[ \frac{\widetilde Z_n -\mu}{\sigma} = \frac{\widetilde Y_n -\mu}{\sigma} + \frac{2 \textup{Im}\left((\widetilde Y_n-\mu)/\sigma\right) i}{n} \sum_{j=1}^{n} h\left(\frac{X_j -\mu}{\sigma}, \frac{\widetilde Y_n -\mu}{\sigma} \right), \]
the distribution of $\dfrac{\widetilde Z_n -\mu}{\sigma}$ is identical with that of $\widetilde Z_n$ under $\theta = i$. 
Therefore, for $f_1$, $(\widetilde Z_n)_n$ has such invariance as the MLE has, and hence, it suffices to consider the standard Cauchy distribution only. 
Furthermore, the performances of $(\widetilde Z_n)_n$ in the case of the logarithmic functions $f_1$ and $f_2$ are similar to that of the case of MLE in Table \ref{tab:MLE}. 

However, there are no theoretical guarantees of these performances. 
We are not sure whether $(\widetilde Y_n)_n$ is consistent or not, and, $\widetilde Y_n$ for $f_1$ works well only if the sample size $n$ is even. 
If $n$ is odd, then, $\textup{Im}(\widetilde Y_n) = 0$.  
It would be overcome by changing the definition of the median slightly. 
One way is to adopt $(x_{(n-1)/2}+x_{(n+1)/2}+x_{(n+3)/2})/3$ for $\{x_1 < \cdots < x_{n}\}$. 
Here we do not discuss this issue further. 
See also  \cite[Subsection 5.7]{Akaoka2021-3} for some delicate issues for median-adjusting.

\subsection*{\it Acknowledgements} \ 
The authors wish to express their gratitude to referees for their many helpful comments and suggestions to improve the paper, in particular, for suggesting numerical computations in Appendix and notifying the authors of a recent reference \cite{Kato2020} concerning the circular Cauchy distribution. 
The second and third authors were supported by JSPS KAKENHI 19K14549 and 16K05196 respectively.

\bibliographystyle{amsplain}
\bibliography{Cauchy2}

\end{document}